\definecolor{blue5}{RGB}{1,31,51}
\definecolor{green4}{RGB}{21,103,84}
	\setlist[enumerate,1]{label=(\roman*), ref=(\roman*)}		
	\setlist[enumerate,2]{label=\alph*), ref=(\theenumi.\alph*)} 	
	\setlist[enumerate,3]{label=\arabic*., ref=(\theenumii.\arabic*)} 	
\DeclareFontFamily{OMS}{fcmsy}{\skewchar\font48 }
\DeclareFontShape{OMS}{fcmsy}{m}{n}{%
         <-5.5> [.942] cmsy5     <5.5-6.5> [.942] cmsy6
      <6.5-7.5> [.942] cmsy7     <7.5-8.5> [.942] cmsy8
      <8.5-9.5> [.942] cmsy9     <9.5->  [.942] cmsy10
      }{}
\DeclareFontShape{OMS}{fcmsy}{b}{n}{%
       <-6> [.942] cmbsy5
      <6-8> [.942] cmbsy7
      <8->  [.942] cmbsy10
      }{}
\DeclareMathAlphabet{\mathcal}{OMS}{fcmsy}{m}{n}
\renewcommand{\:}{\colon} 
\renewcommand{\tilde}{\widetilde} 
\renewcommand{\hat}{\widehat} 
\newcommand{\subeq}{\subseteq} 
\newcommand{\ssssarr}{\hbox to 15pt{\rightarrowfill}}
\newcommand{\sssarr}{\hbox to 20pt{\rightarrowfill}}
\newcommand{\ssarr}{\hbox to 30pt{\rightarrowfill}}
\newcommand{\sarr}{\hbox to 40pt{\rightarrowfill}}
\newcommand{\arr}{\hbox to 60pt{\rightarrowfill}}
\newcommand{\larr}{\hbox to 60pt{\leftarrowfill}}
\newcommand{\Arr}{\hbox to 80pt{\rightarrowfill}}
\newcommand{\ssmapright}[1]{\smash{\mathop{\ssarr}\limits^{#1}}}
\newcommand{\smapright}[1]{\smash{\mathop{\sarr}\limits^{#1}}}
\newcommand{\defeq}{
	\mathrel{ \vcenter{ 
		\baselineskip0.5ex \lineskiplimit0pt 
		\hbox{\small.}
		\hbox{\small.}
	}}
	=
}
\newcommand \al{\alpha}
\newcommand\ga{\gamma}
\renewcommand\th{\theta}
\newcommand\si{\sigma}
\newcommand\vphi{\varphi}
\newcommand\ph{\varphi}
\newcommand\om{\omega}
\newcommand\Om{\Omega}
\newcommand\on{\operatorname}
\newcommand\ie{i.e.\ }
\newcommand\cf{cf.\@\xspace}
\newcommand*{\eg}{e.g.\@\xspace}
\renewcommand\o{\circ}
\newcommand\ch{\on{ch}}
\newcommand\hol{\on{hol}}
\newcommand\curv{\on{curv}}
\newcommand\Flux{\on{Flux}}
\newcommand\flux{\on{flux}}
\newcommand\ex{\on{ex}}
\newcommand\Aut{\on{Aut}}
\newcommand\cM{\mathcal{M}}
\newcommand\cP{\mathcal{P}}
\newcommand\M{\mathcal{M}}
\renewcommand\P{\mathcal{P}}
\newcommand\Diff{\on{Diff}}
\newcommand\AutGroup{\on{Aut}}
\newcommand\id{\on{id}}
\newcommand\pr{\smash[b]{\mathrm{pr}}}
\newcommand\ev{\on{ev}}
\newcommand\Ham{\on{Ham}}
\newcommand\Alt{\on{Alt}}
\newcommand\Hom{\on{Hom}}
\newcommand\ham{\on{ham}}
\newcommand\fg{\mathfrak g}
\newcommand\fh{\mathfrak h}
\newcommand\fz{\mathfrak z}
\newcommand\pa{\partial}
\newcommand\dif{{\mathop{}\!\mathrm{d}}}
\newcommand\difCE{\dif_{\mathrm{CE}}}
\newcommand\ZZ{\mathbb Z}
\newcommand\TT{\mathbb T}
\newcommand\RR{\mathbb{R}}
\newcommand\R{\mathbb{R}}
\newcommand\T{\mathbb{T}}
\newcommand\Z{\mathbb{Z}}
\newcommand\bC{\mathbb{C}}
\newcommand\X{\mathfrak X}
\newcommand\F{\mathcal F}
\newcommand\hatProduct{\mathbin{\hat{\ast}}}
\DeclarePairedDelimiter\equivClass{[}{]}				
\newcommand{\straightpath}[1]{\underline{#1\vphantom{v_0}}}
\DeclarePairedDelimiterX\scalarProd[2]{\langle}{\rangle}{#1,#2}	
\NewDocumentCommand\XXXint{m m m m}{
	{{\setbox0=\hbox{$#1{#2#3}{\int_{#4}}$}\vcenter{\hbox{$#2#3$}}\kern-.5\wd0}}
}
\newcommand\intFibre{\fint}				
\ProvideDocumentCommand{\given}{}{}
\NewDocumentCommand{\setSymbol}{o}{
	\nonscript \, : 
	\allowbreak
	\nonscript \,
	\mathopen{ }
}
\DeclarePairedDelimiterX\set[1]{\{}{\}}{				
	\RenewDocumentCommand{\given}{}{\setSymbol[\delimsize]}
	#1
}
\NewDocumentCommand{\thmSep}{}{8.0pt plus 2.0pt minus 4.0pt}
\declaretheoremstyle[
		spacebelow=\thmSep,
		spaceabove=\thmSep,
		headfont=\bfseries,
		notefont=\normalfont,
		bodyfont=\normalfont\itshape,
		postheadspace=1em,
		qed=$\diamondsuit$,
		headpunct={}
]{myTheorem}
\declaretheoremstyle[
		spacebelow=\thmSep,
		spaceabove=\thmSep,
		headfont=\itshape,
		notefont=\normalfont,
		bodyfont=\normalfont,
		postheadspace=1em,
		qed=$\diamondsuit$,
		headpunct={}
]{myRemark}
\declaretheorem[numberwithin=section, style=myTheorem]{theorem}
\declaretheorem[sibling=theorem, style=myTheorem]{proposition}
\declaretheorem[sibling=theorem, style=myTheorem]{corollary}
\declaretheorem[sibling=theorem, name=Problem, style=myTheorem]{pro}
\declaretheorem[sibling=theorem, style=myRemark]{remark}
	\newenvironment{rem}{\begin{remark}\rm}{\end{remark}}
\declaretheorem[sibling=theorem, style=myTheorem]{example}
\begin{document}

\title{Central extensions of Lie groups
preserving a differential form
}
\author{Tobias Diez, Bas Janssens, Karl-Hermann Neeb and Cornelia Vizman}


\maketitle
\date 

\begin{abstract}
Let $M$ be a manifold with a closed, integral $(k+1)$-form $\omega$, and let
$G$ be a Fr\'echet--Lie group acting on $(M,\omega)$. 
As a generalization of
the Kostant--Souriau extension for symplectic manifolds, 
we consider a canonical class
of central extensions of $\fg$ by $\R$, indexed by
$H^{k-1}(M,\R)^*$.
We show that the image of $H_{k-1}(M,\Z)$ in $H^{k-1}(M,\R)^*$
corresponds to a lattice of Lie algebra extensions 
that integrate to smooth central extensions of $G$ by
the circle group $\T$.
The idea is to represent
a class in $H_{k-1}(M,\Z)$ by a weighted 
submanifold $(S,\beta)$, where $\beta$ is a closed, integral 
form on~$S$.
We 
use transgression of differential characters from
\( S\) and \( M \) to 
the mapping space \( C^\infty(S, M) \), and apply the Kostant--Souriau 
construction on \( C^\infty(S, M) \).

\end{abstract}

\setcounter{tocdepth}{2}
\tableofcontents 

\section{Introduction}\label{sec:intro}

In this paper, we investigate the integrability of a natural class of Lie algebra extensions 
arising from an \emph{exact} action of a (possibly infinite-dimensional) Lie group $G$ on 
an $n$-dimensional manifold $M$, endowed with a closed, integral $(k+1)$-form $\omega$.



To describe this class of extensions,
recall that the action of a Lie group $G$ with Lie algebra $\fg$ is called exact if it preserves $\omega$ and, for all $X\in \fg$, 
the insertion
of the fundamental vector field $X_M$ in $\omega$ is exact, \ie $i_{X_M}\omega = \dif\psi_{X}$
for some $(k-1)$-form $\psi_{X}$.
The fact that 
$\psi_{X}$ is not uniquely determined gives rise to a natural central extension
\begin{equation}\label{eq:ceonLAaction}
H^{k-1}(M,\R) \rightarrow \widehat{\fg} \stackrel{\pi}{\longrightarrow} \fg,
\end{equation}
which is canonically associated to the action of $\fg$ on $(M,\omega)$.
The Lie algebra $\widehat{\fg}$ is defined by 
\begin{equation}
\widehat{\fg} := \{(X,[\psi_{X}]) \in \fg \times \overline{\Omega}{}^{k-1}(M)\;:\; i_{X_M}\omega = \dif\psi_{X}\},
\end{equation}
where $[\psi_{X}]$ denotes the class of $\psi_{X}$ in $\overline{\Omega}{}^{k-1}(M) := \Omega^{k-1}(M)/\dif\Omega^{k-2}(M)$.
The Lie bracket on $\widehat{\fg}$ is given by $
[(X,[\psi_{X}]),(Y,[\phi_{Y}])] := ([X,Y], [i_{X_{M}} i_{Y_{M}} \omega])
$ (cf. \cite[Thm~13]{Ne05}).

Although we do not take the `higher category' point of view in this paper, 
let us briefly mention that 
the central extension~\eqref{eq:ceonLAaction} is part of 
an $L_{\infty}$-algebra arising naturally in multisymplectic geometry 
\cite{Rogers2012,Zambon2012}, where it provides the higher analogue of a momentum map
\cite{CalliesFregierRogersZambon2016}.
It is connected to the `quantomorphism $n$-group' of a higher prequantum bundle~\cite{FRS14},
and, for $k=2$, can be interpreted as the 
Lie 2-algebra of observables for a classical bosonic string coupled to a $B$-field~\cite{BaezHoffnungRogers2010}.


We are interested in integrability of those extensions of $\fg$ by $\R$ that factor through the 
central extension~\eqref{eq:ceonLAaction}.
Indeed, for every nonzero $\lambda \in H^{k-1}(M,\R)^*$,
the quotient $\fg_{\lambda} := \hat{\fg}/\mathrm{Ker}(\lambda)$ is a central extension 
of $\fg$ by $H^{k-1}(M,\R)/\ker(\lambda)$, which is identified with $\R$
using the functional $\lambda$.
The question, then, is to determine elements of $H^{k-1}(M,\R)^*$ for which the corresponding
Lie algebra extension $\R \rightarrow \smash{\widehat{\fg}_{\lambda}} \rightarrow \fg$ integrates to a smooth Lie group extension $\TT \rightarrow \smash{\widehat{G}{}_{\lambda}} \rightarrow G$.

It is instructive to consider the classical case where $k=1$ and $(M,\omega)$ is a symplectic manifold. 
Here the  $G$-action is exact if it is (weakly) Hamiltonian, 
and the central extension canonically associated to the Hamiltonian action is the 
\emph{Kostant--Souriau extension} (\cite[\S II.11]{Souriau}, \cite{Kostant})
\begin{equation}
H^0(M,\R) \rightarrow \widehat{\fg} \stackrel{\pi}{\longrightarrow} \fg.
\end{equation}
If $(M,\omega)$ is connected and prequantizable, then the image of $H_0(M,\Z)$ in 
$H^0(M,\R)^*$ yields a lattice of integrable Lie algebra extensions.
The corresponding Lie group extensions are obtained by pulling back
the group extension $\TT \rightarrow \mathrm{Aut}(P,\nabla) \rightarrow \mathrm{Ham}(M)$
along the Hamiltonian action $\iota \colon G \rightarrow \mathrm{Ham}(M,\omega)$, where $(P,\nabla)$
is a prequantum bundle whose curvature class is an integral multiple of $[\omega]\in H^2(M,\R)$
(cf.\ \cite[\S 2.4]{Brylinski2007}). 
Note that even if $\R \rightarrow \widehat{\fg}_{\lambda} \rightarrow \fg$ is a trivial extension,
the corresponding group extension may still be nontrivial.
In general, this will depend on the choice of prequantum line bundle $(P,\nabla)$.

A more representative example is the case where $k= n-1$ and $(M,\omega)$ is a mani\-fold
with a volume form.
Here the central extension associated to the exact divergence-free action of 
$\fg$ on $(M,\omega)$ is the 
\emph{Lichnerowicz extension}  \cite{Lichnerowicz}
\begin{equation}
H^{n-2}(M,\R) \rightarrow \widehat{\fg} \stackrel{\pi}{\longrightarrow} \fg.
\end{equation}
If $M$ is compact, then the group $G = \mathrm{Diff}_{\ex}(M,\omega)$ of exact volume-preserving 
diffeomorphisms is a Fr\'echet--Lie group. 
Its action on $(M,\omega)$ is exact by definition, and the image of $H_{n-2}(M,\Z)$ in $H^{n-2}(M,\R)^*$
yields a lattice of integrable $\R$-extensions for the Lie algebra 
$\fg = \X_{\ex}(M,\omega)$ of exact divergence-free vector fields.
The corresponding group extensions 
are the 
\emph{Ismagilov central extensions}~\cite[Sec.~25.3]{Ismagilov}.
In \cite{HV}, they were constructed for integral volume forms $\omega$ using a prequantum line bundle 
over the (infinite dimensional) nonlinear Grassmannian $\mathrm{Gr}_{n-2}(M)$ of compact, oriented submanifolds 
of codimension 2. 

The main result of this paper is
Theorem~\ref{cor:rol}, where we prove a generalization of the above results for arbitrary~$k$.
If $G$ is a connected Fr\'echet--Lie group with an exact action 
$\alpha \colon G \times M \rightarrow M$ on a compact manifold $(M,\omega)$,
and if $\omega$ is an integral $(k+1)$-form on $M$, 
then the image of $H_{k-1}(M,\Z)$ in $H^{k-1}(M,\R)^*$ yields a lattice of integrable 
Lie algebra extensions of $\fg$ by $\R$.

In fact, the requirements that $M$ be compact and that $G$ be connected
can both be relaxed. 
For classes in $H_{k-1}(M,\Z)$ that can be represented by a closed, 
oriented submanifold $S$, we show in Corollary~\ref{cor:rol2} 
that the compactness assumption on $M$ is not needed.
And rather than requiring $G$ to be connected, we require the weaker property that
every $\alpha_{g} \colon M \rightarrow M$ is homotopic to the identity.

Moreover, in this setting, the appropriate analogue of an exact action is expressed 
in terms of \emph{differential characters}.
Differential characters (or Cheeger--Simons characters) are generalizations
of principal circle bundles with connection.
In particular, the \emph{curvature} of a differential character $h$ of degree $k$ 
is a closed, integral $(k+1)$-form~$\omega$.
We give a brief overview of the theory of differential characters in \cref{s1}, and refer the reader to~\cite{CS85,BB} for further details.
Rather than requiring the action of $G$ to be exact, we require the condition 
that $\alpha^*_{g} h = h$ for all $g\in G$, where $h$ is a given differential character
with curvature~$\omega$.
In general, different characters with the same curvature may give rise to different 
group extensions with the same Lie algebra extension.


The above results rely heavily on the machinery of \emph{transgression} for 
differential characters.
Let $S$ be a compact, oriented manifold, and consider the canonical evaluation and projection maps:
\[ 
\begin{tikzcd}
M	& C^{\infty}(S,M)\times S	\arrow[swap]{l}{\ev} \arrow{d}{\pr_1} \arrow{r}{\pr_2}		&  S \\
	& C^{\infty}(S,M).										&
\end{tikzcd}
\]
On the level of differential forms, 
the \emph{hat product} \( \alpha \hatProduct \beta \) of \( \alpha \in \Omega^{\ell+1}(M) \) and \( \beta \in \Omega^{r+1}(S) \) is the differential form on the mapping space \( C^\infty(S, M) \) of degree \( 2 + \ell + r - \dim S \) obtained by pulling back \( \alpha \) and \( \beta \) to \( C^{\infty}(S,M)\times S \), and subsequently integrating over the fiber \( S \), see~\cite{Vizman2}.
If \( \alpha \) and \( \beta \) are closed, and the degrees satisfy \( \ell + r = \dim S \), then the hat product \( \alpha \hatProduct \beta \) yields a closed 2-form on \( C^\infty(S, M) \).
In this case, every smooth action of a Lie algebra \( \fg \) on \( C^\infty(S, M) \) preserving \( \alpha \hatProduct \beta \)  gives rise to a continuous \( 2 \)-cocycle \( \tau \) on \( \fg \) defined by
\begin{equation}\label{eq:tautau}
	\tau(X, Y) = (\alpha \hatProduct \beta)_\Phi (X_\Phi, Y_\Phi),
\end{equation}
where \( \Phi \in C^\infty(S, M) \), \( X, Y \in \fg \), and
 \( X_\Phi \) denotes the value of the fundamental vector field at \( \Phi \) 
 generated by the action of \( X \in \fg \) on \( C^\infty(S, M) \).
 
To construct central Lie group extensions, we refine the above setting by additional data.
Thus, we assume that \( \alpha \) and \( \beta \) are curvature forms of differential characters \( h \in \widehat H^\ell(M,\TT) \) and \( g \in \widehat H^r(S,\TT) \), respectively.
Such differential characters exist if and only if the closed forms $\alpha$ and $\beta$
are integral.
The hat product \( h \hatProduct g \) of the differential characters \( h \) and \( g \) is a differential character on \( C^\infty(S, M) \) defined in complete analogy to the hat product of differential forms (cf. equation~\eqref{eq:konijn}).
In particular, the curvature of \( h \hatProduct g \) is \( \alpha \hatProduct \beta \).
If \( \ell + r = \dim S \), then \( h \hatProduct g \) is a differential character of degree 1, 
which  
can be represented as the holonomy of 
a principal circle bundle \( \mathcal{P} \) over \( C^\infty(S, M) \), 
endowed with a connection \( \nabla \) whose curvature is \( \alpha \hatProduct \beta \) (see \cref{prop::prequantumBundleOnFunctionSpace}).
Consider a smooth action of a Fr\'echet--Lie group \( G \) on \( C^\infty(S, M) \), and assume that this action preserves \( h \hatProduct g \).
In particular, the \( G \)-action leaves \( \alpha \hatProduct \beta \) invariant.
If, furthermore, $G$ preserves the connected component $C^{\infty}(S,M)_{\Phi}$ of a map $\Phi$, 
then we obtain a central extension 
\begin{equation*}
	\label{eq:extensionsLiePrelim0}
	\T \rightarrow \widehat{G} \rightarrow G,
\end{equation*}
where $\widehat{G}$ is the group of automorphisms of $(\mathcal{P},\nabla)$
covering the $G$-action on $C^{\infty}(S,M)_{\Phi}$.
This is a \emph{smooth} central extension of Lie groups, and the
associated Lie algebra extension is characterized by the cohomology class of 
the cocycle \( \tau \) of \eqref{eq:tautau}.

In \cref{prop::centralExtensionOfDiffeoGroups}
we apply this to the case where the \( G \)-action on \( C^\infty(S, M) \) 
is induced by an action on the finite-dimensional manifolds \( S \) or \( M \), preserving all the relevant data. This immediately yields Theorem~\ref{cor:rol}
on lattices of integrable Lie algebra extensions.
The more involved example of the action of the current group \( C^\infty(S, G) \) on \( C^\infty(S, M) \) will be treated elsewhere~\cite{DJNV}.

\paragraph*{Notation:} We write $\T = \set{z \in \bC^\times \given |z| = 1}$ 
for the circle group which we identify 
with $\R/\Z$. Accordingly, we write 
$\exp_\T(t) = e^{2\pi it}$ for its exponential function. 
For a smooth manifold $M$, we call 
a curve $(\ph_t)_{0 \leq t \leq 1}$ in $\Diff(M)$ 
{\it smooth} if the map 
\[ [0,1] \times M \to M^2, \quad (t,m) \mapsto (\ph_t(m), \ph_t^{-1}(m)) \] 
 is smooth. 
We write $\Diff(M)_0 \subeq\Diff(M)$ for the normal 
subgroup of those diffeomorphisms $\ph$ for which there exists a smooth 
curve from $\id_M$ to $\ph$. 
All finite dimensional manifolds $M$ are required to be second countable.

\paragraph*{Acknowledgments:} K.-H.~Neeb and C.~Vizman thank the 
Erwin-Schr\"o\-dinger-Institute for Mathematical Physics for 
hospitality and a stimulating working atmosphere during a research stay 
within the program ``Infinite-dimensional Riemannian geometry with applications 
to image matching and shape analysis''. 
C.~Vizman was supported by a grant of Ministery of Research and Innovation, CNCS - UEFISCDI, 
project number PN-III-P4-ID-PCE-2016-0778, within PNCDI III.
B.~Janssens was supported by the 
NWO grant 613.001.214 ``Generalised Lie algebra sheaves", and by the NWO grant 639.032.734
``Cohomology and representation theory of infinite dimensional Lie groups''.
T.~Diez gratefully acknowledges support by the Max Planck Institute for Mathematics in the Sciences (Leipzig),
and by the above NWO grant 639.032.734.

\section{Differential characters and line bundles}
\label{s1}

We introduce differential characters following~\cite{CS85} and~\cite{BB}.
In this section $M$ denotes a locally convex smooth manifold 
for which the de Rham isomorphism holds\footnote{See~\cite[Thm.~34.7]{KrMi97} 
for a de Rham Theorem in this context and sufficient criteria for it to hold.}.
Let $C_k(M)$ be the group of smooth singular $k$-chains, and let $Z_k(M)$ and $B_k(M)$ denote the groups of $k$-cycles and $k$-boundaries,
so that $H_k(M) \defeq Z_k(M)/B_k(M)$ is the $k$-th smooth singular homology group.

A \emph{differential character (Cheeger--Simons character) of degree $k$} 
is a group homomorphism 
$h:Z_k(M)\to\TT$ for which there exists a 
differential form $\om\in\Om^{k+1}(M)$ such that 
$h(\pa\si)= \exp_\T\bigl(\int_\si\om\bigr)$ holds for every $\sigma \in C_{k+1}(M)$. 
Then $\omega$ is uniquely determined by $h$.
It is called the \emph{curvature of $h$} and will be denoted by $\curv(h)$.
We write 
\[ \widehat H^{k}(M,\TT) \subeq \Hom(Z_{k}(M),\T) \]
for the group of differential characters of degree $k$.\begin{footnote}
{In~\cite{BB} this group is denoted $\hat H^{k+1}(M,\Z)$. In this sense our notations 
are compatible, although the degree is shifted by $1$. Our convention 
follows the original one introduced by Cheeger and Simons in~\cite{CS85}.}  
\end{footnote}
It contains the cohomology group $H^{k}(M,\TT)\cong 
\Hom(H_{k}(M),\TT)\subeq \Hom(Z_k(M),\T)$ 
as the subgroup of differential characters that vanish on $k$-boundaries. 
This is the subgroup of differential characters with curvature zero.
For $\omega= \curv(h)$, the relation \( 1 = h(\pa \varsigma) = \exp_\T\bigl(\int_\varsigma\om\bigr) \) for every \( \varsigma \in Z_{k+1}(M) \)
implies that $\omega$ belongs to the abelian group 
\[ \Omega_\ZZ^{k+1}(M) \defeq \set*{\omega \in \Omega^{k+1}(M) \given \int_{Z_{k+1}(M)} 
\omega \subeq \Z}\]
of forms with integral periods.
Note that such forms are automatically closed. 
We thus get an exact sequence 
\begin{equation}
  \label{eq:1}
0\to H^{k}(M,\TT)\hookrightarrow \widehat H^{k}(M,\TT)\smapright{\curv}\Om_\ZZ^{k+1}(M)\to 0.
\end{equation}

A differential character $h$ of degree $k$ also
determines a \emph{characteristic class} $\ch(h)\in H^{k+1}(M,\ZZ)$.
In order to define $\ch(h)$, let
$\tilde h \: Z_{k}(M) \to \R$ be a homomorphism that lifts $h$ 
(such a homomorphism exists because $Z_{k}(M)$ is a free abelian 
group). Consider the cocycle $\hat h \: C_{k+1}(M) \to \Z$ given by
\[\hat h(c) \defeq \int_c \curv(h)  - \tilde h(\partial c)\]
and define \( \ch(h) \) to be the class of \( \hat{h} \) in \( H^{k+1}(M,\Z) \).
The image of $\ch(h)$ in $H^{k+1}(M,\RR)$ coincides with the class of $\curv(h)$,
when we identify $\RR$-valued cohomology and de Rham cohomology
via the de Rham isomorphism.
In particular, $\curv(h)$ is exact if and only if the image of $\ch(h)$ in \( H^{k+1}(M, \R) \) is trivial. 
To each differential form $\al\in\Om^k(M)$ one assigns a differential character 
$i(\al)$ defined by
\begin{equation}
	\label{eq:trivialBundleInclusion}
	i(\al)(c) \defeq \exp_\T\bigg(\int_c\al\bigg)
\end{equation}
for \( c\in Z_k(M) \).
Note that 
\begin{equation}
  \label{eq:theta}
\curv(i(\alpha)) = \dif \alpha.
\end{equation}
The kernel of $i$ is $\Om^k_{\ZZ}(M)$.
We thus obtain an exact sequence, \cf~\cite[Eq.~30]{BB},
\begin{equation}
  \label{eq:ch-seq}
0\to \Omega^k_\Z(M) \to \Om^k(M)\ssmapright{i} \widehat H^{k}(M,\TT)\ssmapright{\ch} 
H^{k+1}(M,\ZZ)\to 0.
\end{equation}

Differential characters of degree 1 can be realized as the holonomy of a 
principal circle bundle $P\to M$ with connection form $\th\in\Om^1(P)$.
Indeed, the holonomy map $h$ assigns to each
piecewise smooth $1$-cycle $c\in Z_1(M)$ an element $h(c)\in\TT$.
If the connection has curvature $\om\in\Om^2(M)$, then 
$h(\partial \sigma) = \exp_\T(\int_{\sigma}\om)$ for all \( \sigma \in Z_2(M) \), so that  
$h\in \widehat H^1(M,\TT)$ is a differential character whose curvature
is the same as that of the connection, $\curv(h) = \omega$.
The class $\ch(h)\in H^2(M,\ZZ)$, which does not depend on $\theta$,
is the \emph{first Chern class} of the bundle $P$.  
In this paper, we will often need the following well-known result. 

\begin{restatable}{proposition}{HOneIsBundles}\label{thm:H1isbundles}
The map $(P,\theta) \mapsto h$ that assigns to a principal circle bundle \( P \) with connection \( \theta \) its holonomy map \( h \)  
defines an isomorphism from
the group 
of equivalence classes of pairs $(P,\theta)$ modulo bundle automorphisms onto 
the group $\widehat H^1(M,\TT)$ of differential characters of degree \( 1 \). 
\end{restatable}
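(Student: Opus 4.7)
The plan is to verify, in this order, that the assignment $(P,\theta) \mapsto h$ is (a) well defined on equivalence classes, (b) a group homomorphism, (c) surjective, and (d) injective, using the short exact sequence~\eqref{eq:ch-seq} as the main tool on the character side.

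Parts (a) and (b) are essentially by construction: a connection-preserving bundle isomorphism sends horizontal lifts to horizontal lifts and hence preserves holonomy around every loop; and for (b), the group law on the source is realized by the tensor product $(P_1 \otimes P_2, \theta_1 + \theta_2)$ of principal $\T$-bundles with connection, whose holonomy is the pointwise product of the holonomies of the factors.

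For surjectivity, let $\omega \defeq \curv(h)$ and $c \defeq \ch(h) \in H^2(M,\ZZ)$. I would first pick any principal $\T$-bundle $P \to M$ with $c_1(P) = c$ together with an arbitrary connection $\theta_0$ of curvature $\omega_0$. Since $[\omega_0]$ and $[\omega]$ both represent the real image of $c$, one can write $\omega - \omega_0 = \dif\alpha$ for some $\alpha \in \Omega^1(M)$ and set $\theta_1 \defeq \theta_0 + \pi^*\alpha$; the resulting holonomy $h_1$ then satisfies $\curv(h_1) = \omega$ and $\ch(h_1) = c$. The difference $h - h_1$ therefore lies in the kernels of both $\curv$ and $\ch$, so \eqref{eq:ch-seq} (combined with \eqref{eq:theta}) provides a \emph{closed} $\beta \in \Omega^1(M)$ with $h - h_1 = i(\beta)$, and $\theta \defeq \theta_1 + \pi^*\beta$ is the desired connection with holonomy $h$.

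For injectivity, suppose $(P_1,\theta_1)$ and $(P_2,\theta_2)$ share the holonomy $h$. The equality $c_1(P_i) = \ch(h)$ produces an isomorphism of principal bundles $\phi \colon P_1 \to P_2$, and $\phi^*\theta_2 - \theta_1 = \pi^*\alpha$ for some $\alpha \in \Omega^1(M)$. Equality of curvatures forces $\dif\alpha = 0$, and equality of holonomies forces $i(\alpha) = 0$, hence $\alpha \in \Omega^1_\ZZ(M)$. The main obstacle, and the only step I expect to be genuinely nontrivial, is to integrate this integral closed $1$-form to a gauge transformation $u \colon M \to \T$ whose logarithmic derivative is $\alpha$; this amounts to the standard identification of $\Omega^1_\ZZ(M)$ with the image of the logarithmic derivative $C^\infty(M,\T) \to \Omega^1(M)$, and in the present locally convex setting it rests on the de Rham isomorphism assumed throughout~\cref{s1}. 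Composing $\phi$ with the gauge transformation induced by $u$ then yields a connection-preserving isomorphism between $(P_1,\theta_1)$ and $(P_2,\theta_2)$, completing the proof.
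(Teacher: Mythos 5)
Your strategy is sound for a finite-dimensional (or, more generally, smoothly paracompact) base, but it is genuinely different from the paper's and, as written, has a gap relative to the generality in which the proposition is actually used. Both your surjectivity and your injectivity arguments take as input the topological classification of principal $\T$-bundles: you need (i) the existence of a principal $\T$-bundle $P\to M$ with $c_1(P)=\ch(h)$ for an arbitrarily prescribed class, and (ii) that two principal $\T$-bundles with equal Chern classes are isomorphic. These facts rest on the identification of $H^2(M,\Z)$ with \v{C}ech cohomology via the exponential sequence, which requires smooth partitions of unity (or good covers) and is not available off the shelf for the locally convex manifolds covered by the standing hypotheses of \cref{s1} --- in particular for the Fr\'echet manifold $C^\infty(S,M)$, which is exactly where \cref{prop::prequantumBundleOnFunctionSpace} applies the proposition. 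The paper's proof in \cref{sec::diffCharactersDegreeOne} is engineered to avoid this input: surjectivity is obtained by constructing the bundle directly from $h$ as a union of source fibres of the Lie groupoid $(PM\times\T)/\sim$, where $(\gamma_1,z_1)\sim(\gamma_2,z_2)$ iff $h(\gamma_1\ast\gamma_2^{-1})=z_1^{-1}z_2$, and injectivity is proved by exhibiting a flat section of $\overline{P}\otimes_{\T}Q$ via parallel transport, with equality of holonomies guaranteeing path-independence; neither step appeals to Chern classes.

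The remaining ingredients of your argument are correct: the passage from $\ch(h-h_1)=0$ to $h-h_1=i(\beta)$ and then from $\curv(i(\beta))=\dif\beta=0$ to $\beta$ closed, via \eqref{eq:ch-seq} and \eqref{eq:theta}, is fine, as is the integration of a closed $\alpha\in\Omega^1_\Z(M)$ to a map $u\colon M\to\T$ with logarithmic derivative $\alpha$ (define $u$ by integrating $\alpha$ along paths from base points of the connected components; integrality of the periods makes this well defined modulo $\Z$, and this step is in fact a one-dimensional shadow of the paper's parallel-transport construction). To repair the proof in the stated generality you would either have to restrict to manifolds admitting smooth partitions of unity and establish the bundle classification there, or replace steps (i) and (ii) by a direct construction of the bundle from the character, which is precisely what the path-groupoid argument of the paper accomplishes.
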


This can be derived from the fact that principal circle bundles with connection are 
classified by Deligne cohomology~\cite[Thm.~2.2.12]{Brylinski2007}, which is an alternative 
model for differential cohomology, \cf~\cite[Sec.~5.2]{BB}.
We give a direct proof in \cref{sec::diffCharactersDegreeOne}.


\section{Stabilizers of a differential character}
\label{s2}

For any manifold \( M \), the right action of the diffeomorphism group  \( \Diff(M) \) on \( \widehat H^k(M, \TT) \) by pull-back
\begin{equation}
	\varphi^* h (c) \defeq h(\varphi \circ c), \quad \text{ for } \varphi \in \Diff(M), 
	\quad h \in \widehat H^k(M, \TT)
\end{equation}
extends to the exact sequence~\eqref{eq:1} of abelian groups:
\begin{equation*}
\xymatrix{
0\ar[r]&H^k(M,\TT)\ar[d]^{\vphi^*}\ar[r] 
&\widehat H^{k}(M,\TT) \ar[d]^{\vphi^*}\ar[r] ^{\curv}
& \Omega_{\Z}^{k+1}(M)\ar[d]^{\vphi^*} \ar[r] & 0\\
0\ar[r]& H^k(M,\TT)\ar[r] &\widehat H^{k}(M,\TT) \ar[r]^{\curv} 
& \Omega_{\Z}^{k+1}(M) \ar[r] & 0.
}
\end{equation*}
Let \( \Diff(M)_0 \) be the subgroup of \( \Diff(M) \) consisting of those diffeomorphisms $\ph$ for which there exists a smooth curve from $\id_M$ to $\ph$.
Note that the action of \( \Diff(M)_0 \) on $H^k(M,\TT)$ is trivial: as 
$\ph \circ c - c$ is a boundary for every $c\in Z_{k}(M)$, we find 
$(\ph^*a - a)(c) = a(\ph \circ c - c) = 1$ for all
$a \in H^k(M,\TT)$. 

Denote the stabilizer group of $\omega \in \Omega^{k+1}_{\Z}(M)$ by 
\begin{equation}
  \label{eq:diff-m-omega}
\Diff(M,\om) \defeq \set{\ph \in \Diff(M) \given \ph^* \om = \om },
\end{equation}
and the stabilizer Lie algebra by 
\begin{equation}
  \label{eq:x-m-omega}
\X(M,\om) \defeq \set{X \in \X(M) \given L_{X}\om = 0}.
\end{equation}

\begin{rem}
Of particular relevance is the case where $\om \in \Omega^{k+1}(M)$ is nondegenerate, 
in the sense that $\om^{\flat} \colon T_mM \to {\rm Alt}^k(T_{m}M,\R), v 
\mapsto i_v \omega_m$
is injective for all $m\in M$. For $k=1$ and $k=\mathrm{dim}(M)-1$, this leads to 
the groups of \emph{symplectic} and \emph{volume-preserving} 
transformations, respectively. If $M$ is compact, then the symplectic and 
volume-preserving diffeomorphism groups $\Diff(M,\omega)$ are Fr\'echet Lie groups,
with Lie algebra $\X(M, \omega)$ consisting of \emph{symplectic} or \emph{divergence free} 
vector fields~\cite[Thm. 43.7, 43.12]{KrMi97}.
\end{rem}

\subsection{Stabilizer groups}


For a differential character 
$h \in \widehat H^{k}(M,\TT)$, we denote its stabilizer by
\begin{equation}
  \label{eq:diff-m-h}
\Diff(M,h) \defeq \set{\ph \in \Diff(M)\given \ph^* h = h}.
\end{equation}

\begin{rem}\label{holo}
Let $h\in \hat H^1(M,\TT)$ be the differential character defined by the holonomy
of the principal $\TT$-bundle $(P,\th)\to(M,\om)$ with curvature $\omega$. Then 
$\ph \in \Diff(M)$ satisfies $\ph^*h = h$ if and only if, for every smooth 
loop $\gamma$ in $M$, the holonomy of the loop $\gamma$ coincides with the 
holonomy of~$\ph\o\ga$. Since this is equivalent 
to the existence of a connection-preserving lift $\tilde\ph \in \Aut(P,\theta)$ 
by~\cite[Thm.~2.7]{NV}, one can view $\Diff(M,h)$ as the group of 
\emph{liftable} diffeomorphisms if $k=1$. 
\end{rem}

Consider the left action \( (\varphi, h) \mapsto \varphi^{-1 *}h \) of $\Diff(M,\om)$ on $\hat H^{k}(M,\T)$.
By~\eqref{eq:1}, the preimage \( \curv^{-1}(\omega) \) is a 
principal homogeneous space (a torsor) for the 
abelian group $H^k(M,\TT)$. 
For every $h$ with $\curv(h) = \omega$, we thus get a \( 1 \)-cocycle 
\begin{equation}\label{fluxh}
\Flux_{h} \colon \Diff(M,\om) \rightarrow H^{k}(M,\TT), \quad
\Flux_{h}(\ph) = (\ph^{-1})^*h - h \,.
\end{equation}
Note that the cohomology class of this cocycle 
$\Flux_h$ in $H^1(\Diff(M,\om), H^k(M,\TT))$ 
is independent of the choice of~$h$. 
The kernel of $\Flux_h$ is the subgroup $\Diff(M,h)$, so that we obtain an exact sequence
\begin{equation}\label{eq:rijtjemeth}
1 \rightarrow \Diff(M,h) \rightarrow \Diff(M,\om) \smapright{\Flux_{h}} H^k(M,\TT)\,,
\end{equation}
where $\Flux_h$ is only a cocycle and not necessarily a group homomorphism.
Since $\Diff(M)_{0}$ acts trivially on $H^{k}(M,\TT)$, 
the restriction of $\Flux_{h}$ to the intersection $\Diff(M,\om)\cap \Diff(M)_{0}$
is a group homomorphism. 
As this homomorphism does not depend on the choice of $h$ with $\curv(h) = \omega$, 
we denote the homomorphism by \( \Flux_\omega \),  and its kernel by
\[ \overline{\Diff}_{\ex}(M,\om) \defeq \Diff(M,h)\cap \Diff(M)_{0}.\] 
Since $\Diff(M)_{0}$ leaves the 
characteristic class of $h$ invariant, the image of $\Flux_\omega$ is contained in the 
kernel of $\ch|_{H^k(M,\T)}$, which can be identified with 
the Jacobian torus 
\[ J^k(M) 
\defeq 
{H^k(M,\R)/H^k(M,\R)_{\Z}\cong  \Hom(H_k(M),\R)/\Hom(H_k(M),\Z) }
\]
the quotient of $H^k(M,\R)$ by the image of $H^k(M,\Z)$.
Summarizing the above discussion, we obtain:
\begin{proposition}
	For any manifold \( M \) and \( \omega \in \Omega_{\Z}^{k+1}(M) \), we have an exact sequence \emph{of groups}
	\begin{equation}\label{eq:rijtjemetnul}
		1 \rightarrow
		\overline{\Diff}_{\ex}(M,\om) \longrightarrow
		\Diff(M,\om)\cap \Diff(M)_{0} \smapright{\Flux_\omega}
		J^k(M)\,.
		\qedhere
	\end{equation}
\end{proposition}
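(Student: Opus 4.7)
The plan is to verify, in order, the three claims encoded in the displayed exact sequence: that $\Flux_\omega$ is a well-defined group homomorphism on $\Diff(M,\om)\cap\Diff(M)_0$ independent of the choice of differential character~$h$, that its kernel is $\overline{\Diff}_{\ex}(M,\om)$, and that its image lies in $J^k(M)$. Most of the work is bookkeeping with the exact sequence~\eqref{eq:1}; the main subtlety is the identification of the image group.

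First I would check that $\Flux_h(\ph) \defeq (\ph^{-1})^*h - h$ lands in $H^k(M,\TT)$ and defines a $1$-cocycle on $\Diff(M,\om)$ with values in the left module $H^k(M,\TT)$, where $\ph \cdot a \defeq (\ph^{-1})^*a$. Well-definedness is immediate from~\eqref{eq:1}: since $\ph^*\om = \om$, one has $\curv((\ph^{-1})^*h) = \om = \curv(h)$, so the difference has vanishing curvature. The cocycle identity
\[
\Flux_h(\ph\psi) \;=\; (\ph^{-1})^*\Flux_h(\psi) + \Flux_h(\ph)
\]
follows from the contravariance $((\ph\psi)^{-1})^* = (\ph^{-1})^*(\psi^{-1})^*$. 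Next, the triviality of the $\Diff(M)_0$-action on $H^k(M,\TT)$ already observed before~\eqref{eq:diff-m-omega} collapses this identity into a genuine homomorphism identity on the restriction to $\Diff(M,\om)\cap\Diff(M)_0$. The same triviality yields independence of $h$: if $h'$ is another character with $\curv(h')=\om$, then $h'-h \in H^k(M,\TT)$, hence $\Flux_{h'}(\ph)-\Flux_h(\ph) = (\ph^{-1})^*(h'-h)-(h'-h) = 0$ for $\ph \in \Diff(M)_0$, justifying the notation~$\Flux_\om$.

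The kernel is clear by the definition of $\Diff(M,h)$, combined with the characterisation of $\overline{\Diff}_{\ex}(M,\om)$ as $\Diff(M,h)\cap\Diff(M)_0$. For the image, I would argue that $\ch\bigl(\Flux_\om(\ph)\bigr) = (\ph^{-1})^*\ch(h) - \ch(h) = 0$, since $\ph \in \Diff(M)_0$ acts trivially on $H^{k+1}(M,\Z)$; hence $\Flux_\om(\ph)$ lies in the kernel of $\ch|_{H^k(M,\TT)}$. The hard part, which is the only step not entirely formal, is identifying this kernel with $J^k(M)$. Via the long exact sequence associated to $0\to\Z\to\R\to\TT\to 0$, this kernel is the image of $H^k(M,\R)\to H^k(M,\TT)$, i.e.\ $H^k(M,\R)/H^k(M,\R)_\Z$; one must invoke the compatibility of the Cheeger--Simons characteristic class $\ch$ restricted to $H^k(M,\TT)\subeq\widehat H^k(M,\TT)$ with the topological Bockstein, which follows from the explicit cocycle formula for $\hat h$ recalled in \cref{s1}. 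This yields the identification with $J^k(M) \cong \Hom(H_k(M),\R)/\Hom(H_k(M),\Z)$ and completes the proof.
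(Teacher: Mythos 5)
Your proposal is correct and follows essentially the same route as the paper: the paper's proof is precisely the preceding discussion, namely that $\curv^{-1}(\omega)$ is an $H^k(M,\TT)$-torsor (so $\Flux_h$ is a well-defined cocycle), that the triviality of the $\Diff(M)_0$-action on $H^k(M,\TT)$ makes the restriction a homomorphism independent of $h$, that the kernel is $\Diff(M,h)\cap\Diff(M)_0 = \overline{\Diff}_{\ex}(M,\om)$ by definition, and that invariance of the characteristic class under $\Diff(M)_0$ places the image in $\ker\bigl(\ch|_{H^k(M,\TT)}\bigr) \cong J^k(M)$. You merely spell out two details the paper leaves implicit (the explicit cocycle identity and the Bockstein identification of $\ker\bigl(\ch|_{H^k(M,\TT)}\bigr)$ with $J^k(M)$), both correctly.
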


\begin{remark}
To define the group 
$\Diff_{\ex}(M,\om)$ of \emph{exact $\omega$-preserving diffeomorphisms}, note that
by a straightforward calculation,
the restriction of \( \Flux_\omega \) to \( \Diff(M,\om)_0 \)
satisfies
\[ \Flux_{\omega}(\ph) = \bigg[ \int_0^1 i_{\delta^l\ph_t} \omega \, \dif t\bigg]\,,\] 
where $(\ph_t)_{0 \leq t\leq 1}$ is a smooth path in $\Diff(M,\omega)$ 
from $\ph_0 = \id_{M}$ to $\ph_1 = \ph$. 
This shows that the restriction of $\Flux_{\omega}$ to
$\Diff(M,\omega)_{0}$ recovers
the flux homomorphism defined in~\cite{Calabi1970,Banyaga1978}, \cf also~\cite[Prop.~1.8]{NV}.
If we define the exact $\omega$-preserving diffeomorphism group by
\begin{equation}
	\Diff_{\ex}(M,\om) \defeq \overline{\Diff}_{\ex}(M,\om)\cap \Diff(M,\omega)_{0} = \Diff(M, h) \cap \Diff(M,\omega)_{0},
\end{equation}
then we obtain the exact sequence of groups
\begin{equation}
	\label{eq:diffExactFluxSequence}
	1 \rightarrow \Diff_{\ex}(M,\om)
	\longrightarrow \Diff(M,\om)_0 
	\smapright{\Flux_{\omega}} J^k(M)\,.
\end{equation}
The elements of $\Diff_{\ex}(M,\om)$ are called exact $\om$-preserving diffeomorphisms. 
In general, the group $\overline{\Diff}_{\ex}(M,\om)$ may be strictly larger 
than the group 
$\Diff_{\ex}(M,\om)$ of exact $\omega$-preserving diffeomorphisms.
\end{remark}

\subsection{Stabilizer Lie algebras}
We now continue with the stabilizers at the infinitesimal level.  
The Lie algebra homomorphism
\begin{equation}\label{infl}
\flux_\omega:\X(M,\om)\to H^k(M,\R),\quad \flux_\omega(X)=[i_X\om]
\end{equation}
is the {\it infinitesimal flux} cocycle whose kernel is the ideal 
\[
\X_{\ex}(M,\om) \defeq \set{X \in \X(M,\omega)\given i_{X} \om \text{ is exact}}.
\]
If $h\in \widehat H^{k}(M,\TT)$ has curvature $\omega$, then
the groups $\Diff(M,h)$, $\overline{\Diff}_{\ex}(M,\om)$ and 
$\Diff_{\ex}(M,\om)$
have $\X_{\ex}(M,\om)$ as their Lie algebra in the sense that, for 
every smooth curve $(\ph_t)_{t \in [0,1]}$ in $\Diff(M)$ 
with $\ph_0 =\id_M$, the curve $(\ph_t)$ is contained in the group 
if and only if its logarithmic derivative 
\begin{equation}
  \label{eq:logder}
\delta^l\ph_t \defeq T(\ph_t)^{-1} {\textstyle \frac{d}{dt}} \ph_t \in \X(M)
\end{equation}
takes values in $\X_{\ex}(M,\om)$.
Accordingly, the exact sequence of Lie algebras associated to~\eqref{eq:rijtjemeth},
\eqref{eq:rijtjemetnul} and~\eqref{eq:diffExactFluxSequence} is 
\begin{equation}\label{infi}
0 \rightarrow \X_{\ex}(M,\om) \longrightarrow \X(M,\om) \smapright{\flux_\omega} 
H^{k}(M,\R)\,.
\end{equation}

\begin{proposition} \label{prop:threegroups}
Let $h\in \widehat H^{k}(M,\TT)$ be a differential character with curvature $\omega$.
All three groups $\Diff(M,h)$,
$\overline{\Diff}_{\ex}(M,\om)$, and $\Diff_{\ex}(M,\om)$
have the same smooth arc-component and the same Lie algebra 
$\X_{\rm ex}(M,\omega)$.  
\end{proposition}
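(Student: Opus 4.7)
My plan is to prove the proposition by establishing the chain of inclusions $\Diff_{\ex}(M,\omega)\subseteq \overline{\Diff}_{\ex}(M,\omega)\subseteq \Diff(M,h)$, showing that smooth curves from the identity in the largest group actually stay in the smallest, and then identifying the logarithmic-derivative Lie algebra with $\X_{\ex}(M,\omega)$. The chain is immediate from the definitions once one notes $\Diff(M,h)\subseteq\Diff(M,\omega)$: if $\ph^*h=h$ then $\ph^*\omega=\ph^*\curv(h)=\curv(\ph^*h)=\omega$.

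The central step is to show that every smooth curve $(\ph_t)_{t\in[0,1]}$ in $\Diff(M,h)$ with $\ph_0=\id_M$ already lies pointwise in $\Diff_{\ex}(M,\omega)$. Each $\ph_t$ is in $\Diff(M)_0$ (by smooth connectivity to $\id_M$) and in $\Diff(M,h)$, hence in $\overline{\Diff}_{\ex}(M,\omega)=\Diff(M,h)\cap\Diff(M)_0$. Since the same curve runs inside $\Diff(M,\omega)$, each $\ph_t$ also belongs to $\Diff(M,\omega)_0$, and therefore to $\overline{\Diff}_{\ex}(M,\omega)\cap\Diff(M,\omega)_0=\Diff_{\ex}(M,\omega)$. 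This simultaneously proves that the three smooth arc-components of $\id_M$ coincide and that a smooth curve from $\id_M$ lies in one of the three groups iff it lies in all of them; in particular, the three groups have the same Lie algebra in the stated sense.

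To identify this common Lie algebra with $\X_{\ex}(M,\omega)$, I would use the flux formula $\Flux_\omega(\ph_t)=[\int_0^t i_{\delta^l\ph_s}\omega\,\dif s]$ together with the exact sequence~\eqref{eq:diffExactFluxSequence}. If $\delta^l\ph_t\in\X_{\ex}(M,\omega)$ for all $t$, then $L_{\delta^l\ph_t}\omega=\dif i_{\delta^l\ph_t}\omega=0$, the integrand is exact, and $\Flux_\omega(\ph_t)=0$, so $\ph_t\in\Diff_{\ex}(M,\omega)$. Conversely, if $(\ph_t)\subseteq\Diff_{\ex}(M,\omega)$, then the continuous path $t\mapsto[\int_0^t i_{\delta^l\ph_s}\omega\,\dif s]$ takes values in the discrete lattice $H^k(M,\R)_\Z$, starts at zero, and is therefore identically zero; differentiating in $t$ gives $[i_{\delta^l\ph_t}\omega]=0$, i.e.\ $\delta^l\ph_t\in\X_{\ex}(M,\omega)$.

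The main obstacle is the discreteness of $H^k(M,\R)_\Z$ in $H^k(M,\R)$ used in the last step — automatic when $H^k(M,\Z)$ is finitely generated (e.g.\ $M$ of finite type), but in full generality one bypasses it by differentiating the relation $\ph_t^*h=h$ directly on singular chains, which yields the vanishing of $[i_{\delta^l\ph_t}\omega]$ without appealing to the lattice structure. Everything else is bookkeeping with the definitions and the flux sequence.
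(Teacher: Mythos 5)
Your argument is correct and follows the same route that the paper leaves implicit in the discussion preceding the proposition: the inclusions $\Diff_{\ex}(M,\om)\subseteq\overline{\Diff}_{\ex}(M,\om)\subseteq\Diff(M,h)\subseteq\Diff(M,\om)$, the observation that a smooth curve from $\id_M$ in the largest group already lies in the smallest, and the flux formula $\Flux_\omega(\ph)=\bigl[\int_0^1 i_{\delta^l\ph_t}\omega\,\dif t\bigr]$ combined with the exact sequence~\eqref{eq:diffExactFluxSequence} to identify the Lie algebra with $\X_{\ex}(M,\om)$. Your closing remark about the period lattice $H^k(M,\R)_{\Z}$ possibly failing to be discrete, and the fix of evaluating against a fixed cycle $c\in Z_k(M)$ (equivalently, differentiating $\ph_t^*h=h$ on chains) so that one only needs a continuous integer-valued function of $t$ to be constant, is a genuine point of care that the paper glosses over and is exactly the right way to close it.
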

 
\begin{remark} 
For manifolds $M$ with trivial $k$-th homology, the relations between the above 
groups simplify.
If $H_k(M) = \set{0}$, then \( \Diff(M,h) = \Diff(M,\om) \), as the last term in 
\eqref{eq:rijtjemeth} vanishes. 
If $H_k(M)$ is a torsion group, then $J^k(M) = 0$, 
so \( \overline{\Diff}_{\ex}(M,\om) = \Diff(M,\om)\cap \Diff(M)_{0}\)
by \eqref{eq:rijtjemetnul},
and \(\Diff_{\ex}(M,\om) = \Diff(M,\om)_{0}\)
by \eqref{eq:diffExactFluxSequence}.
\end{remark}

\begin{remark} If $\om$ is nondegenerate and $k=1$ or $k=n-1$, then the infinitesimal flux homomorphism \( \flux_\omega \)
is surjective.
\begin{enumerate}
\item For $k=1$, the form $\om$ is symplectic, $\X_{\ex}(M,\om)$ is the Lie algebra 
$\X_{\ham}(M,\om)$ of \emph{Hamiltonian vector fields} and  the group 
$\Ham(M,\om)$ of \emph{Hamiltonian diffeomorphisms} coincides with the identity component 
$\Diff_{\ex}(M,\om)_0$.

\item For $k=n-1$,  $\om$ is a volume form, so we get the Lie algebra of \emph{exact divergence free vector fields} $\X_{\ex}(M,\om)$, 
and the group of \emph{exact volume-preserving diffeomorphisms} $\Diff_{\ex}(M,\om)_0$.
\qedhere
\color{black}
\end{enumerate}
\end{remark} 

\subsection{Fr\'echet--Lie groups}

If $M$ is a compact manifold with symplectic form $\omega$, then 
the group $\Diff(M,\om)$ of symplectomorphisms is a 
Fr\'echet--Lie group acting smoothly on $M$.
If $\mu \in \Omega^{d}(M)$ is a volume form, then the same holds for 
the group $\Diff(M,\mu)$ of volume-preserving diffeomorphisms
\cite[\S43]{KrMi97}.
From the following proposition, we see in particular that
$\Ham(M,\omega) \subseteq \Diff(M,\omega)_{0}$ and $\Diff_{\ex}(M,\mu) \subseteq \Diff(M,\mu)_{0}$ are embedded Fr\'echet--Lie subgroups.

\begin{proposition}\label{LieGroups}
Let $M$ be a compact manifold, and let $h\in \widehat H^{k}(M,\TT)$ 
be a differential character with curvature 
$\omega \in \Omega^{k+1}_\Z(M)$. 
Let $G$ be a Fr\'echet--Lie group
with Lie algebra $\fg$, and let 
$i \colon G \rightarrow \Diff(M,\om)$ be a group homomorphism
that induces a smooth action $G\times M \rightarrow M$ 
and hence a Lie algebra homomorphism $i_* \colon \fg \rightarrow \X(M,\om)$.
Then $G_{\ex} \defeq \set{g\in G \given \Flux_{h}(i(g)) = 0}$
is an embedded Lie subgroup of $G$ with Lie algebra
$\fg_{\ex} \defeq \set{\xi \in \fg \given [i_{i_*(\xi)}\omega]=0}$.
\end{proposition}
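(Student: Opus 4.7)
The strategy is to study the smooth 1-cocycle
\[ F \defeq \Flux_h \circ i \colon G \longrightarrow H^k(M,\TT), \]
whose zero set is exactly $G_\ex$. Since $M$ is compact, $H^k(M,\TT)$ is a finite-dimensional Lie group with identity component the Jacobian torus $J^k(M)$. My plan is to (a) establish that $F$ is smooth, (b) handle the identity component $G_0 \subseteq G$, where $F$ restricts to a genuine group homomorphism, and (c) extend to all of $G$ by cosets.

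For (a), I would use the integral representation $\Flux_\omega(\varphi) = \bigl[\int_0^1 i_{\delta^l\varphi_t}\omega\,\dif t\bigr]$ valid on $\Diff(M,\omega)\cap \Diff(M)_0$. Choosing a smoothly parameterised family of paths in $G$ from $e$ to each nearby $g$, and combining smoothness of fibre integration with smoothness of the induced action $G\times M\to M$, shows that $F$ is smooth on a neighbourhood of $e$. Smoothness on the remaining components of $G$ then follows from the cocycle identity $F(g_0 g) = F(g_0) + (i(g_0)^{-1})^{*} F(g)$, derived directly from $\Flux_h(\varphi) = (\varphi^{-1})^{*}h - h$.

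For (b), since Fr\'echet--Lie groups are locally path-connected, $G_0$ is open in $G$, and by continuity $i(G_0) \subseteq \Diff(M,\omega)_0 \subseteq \Diff(M,\omega)\cap\Diff(M)_0$. On this subgroup $\Flux_h$ coincides with the homomorphism $\Flux_\omega$ into $J^k(M)$, so $F|_{G_0}\colon G_0 \to J^k(M)$ is a smooth group homomorphism to a finite-dimensional Lie group. Differentiating the integral formula at $e$ gives $dF_e(\xi) = [i_{i_*(\xi)}\omega] = \flux_\omega(i_*(\xi))$, whose image in $H^k(M,\R)$ is finite-dimensional; hence $\ker(dF_e) = \fg_\ex$ is closed and admits a continuous linear complement in $\fg$. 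Because the target is finite-dimensional, the Fr\'echet submersion/implicit-function theorem applies to $F|_{G_0}$ near $e$, and one obtains that $G_\ex^0 \defeq G_\ex\cap G_0 = \ker(F|_{G_0})$ is an embedded Lie subgroup of $G_0$ (hence of $G$) with Lie algebra $\fg_\ex$.

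For (c), given $g_0 \in G_\ex$ and $h \in G_0$, the cocycle identity yields $F(g_0 h) = (i(g_0)^{-1})^{*}F(h)$; since $(i(g_0)^{-1})^{*}$ is an automorphism of $H^k(M,\TT)$ fixing $0$, this vanishes iff $F(h)=0$. Hence $G_\ex\cap g_0 G_0 = g_0 G_\ex^0$ for every $g_0\in G_\ex$, and the induced map $G_\ex/G_\ex^0 \hookrightarrow G/G_0$ is injective with discrete target. Therefore $G_\ex$ is a disjoint union of clopen translates of the embedded Lie subgroup $G_\ex^0$, hence an embedded Lie subgroup of $G$ with Lie algebra $\fg_\ex$. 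The main obstacle is the Fr\'echet implicit-function step in (b); it succeeds only because the image of $dF_e$ is finite-dimensional and therefore splits continuously, which is precisely what allows the construction of a smooth chart for $G_\ex^0$ near $e$.
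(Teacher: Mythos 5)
Your proof is correct and arrives at the same two pillars as the paper---smoothness of $F=\Flux_h\circ i$ into the finite-dimensional group $H^k(M,\T)$, followed by a Fr\'echet implicit-function/regular-value argument that works precisely because the target is finite dimensional---but it establishes the smoothness of $F$ by a genuinely different route. The paper's proof is much shorter at this step: it notes that $i$ and $\Flux_h$ are morphisms of diffeological spaces, that $G$ and $H^k(M,\T)$ are Fr\'echet manifolds, and that Fr\'echet manifolds form a \emph{full} subcategory of diffeological spaces (Losik), so the composite is automatically smooth with derivative $\flux_\omega\circ i_*$; it then applies Gl\"ockner's theorem to $F$ on all of $G$ at once and dismisses the subgroup property in one line (``because $F$ is a cocycle''). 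You instead prove smoothness near $e$ from the explicit integral formula for $\Flux_\omega$ evaluated along a smooth family of paths, propagate it to the other components via the cocycle identity, and make the component-by-component coset analysis explicit. The diffeological argument buys brevity and avoids the path-family construction; yours buys an explicit local formula for $F$ (from which $dF_e=\flux_\omega\circ i_*$ and the identification of $\fg_\ex$ are immediate) and a self-contained treatment of non-connected $G$. Two minor points of care: the integral formula is valid on $\Diff(M,\omega)_0$ (paths \emph{inside} $\Diff(M,\omega)$), not on all of $\Diff(M,\omega)\cap\Diff(M)_0$---harmless here since $i(G_0)\subseteq\Diff(M,\omega)_0$; and $F|_{G_0}$ need not be a submersion onto $J^k(M)$, so one should first corestrict to the image of $V=\mathrm{im}(dF_e)$ in $J^k(M)$ (your integral formula shows $F(G_0)$ lands there, and two path choices differ by an element of $V\cap H^k(M,\R)_{\Z}$, so the corestriction is well defined) or invoke a finite-rank constant-rank theorem; the paper's appeal to the regular value theorem glosses over the same point.
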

\begin{proof}
Since $G$ acts smoothly on $M$, the homomorphism 
$i \colon G \rightarrow \Diff(M,\omega)$ is a morphism of 
diffeological spaces.
Since the same holds for the flux cocycle
$\Flux_{h}\colon \Diff(M,\omega) \rightarrow H^k(M,\T)$,
the cocycle ${\Flux_{h}} \circ i \colon G \rightarrow H^k(M,\T)$
is a morphism of diffeological spaces as well. 
Since $M$ is compact, $H^k(M,\T)$ is a finite dimensional Lie group.
In particular, $G$ and $H^k(M,\T)$ are both objects in the category 
of Fr\'echet manifolds, which constitutes a full subcategory of 
the category of diffeological spaces~\cite{Losik1994}. 
It follows that ${\Flux_{h}} \circ i \colon G \rightarrow H^k(M,\T)$ is smooth, 
with derivative ${\flux_\omega} \circ i_* \colon \fg \rightarrow H^k(M,\R)$ at the identity.
Since $H^{k}(M,\R)$ is finite dimensional, we conclude from
Gl\"ockner's Implicit Function Theorem~\cite{Gloeckner}
(or more precisely, from the Regular Value Theorem~\cite[Thm.~III.11]{NW08} derived from this),
that the kernel $G_{\ex}$ of ${\Flux_{h}} \circ i$ is a split embedded 
submanifold of $G$. It is a Lie subgroup, because ${\Flux_{h}} \circ i$
is a cocycle. The corresponding Lie algebra $\fg_{\ex}$ is the kernel of  
${\flux_\omega} \circ i_*$.
\end{proof}

\section{Transgression of differential characters}

Let $S$ be a compact, oriented manifold, and let $M$ be a finite dimensional manifold.
We now describe a natural way to combine differential characters from \( S \) and from \( M \) to a differential character on the 
mapping space \( C^\infty(S, M) \), which carries a natural 
Fr\'echet manifold structure~\cite{KrMi97}.
For this, we consider the canonical evaluation and projection maps:
\[ 
\begin{tikzcd}
M	& C^{\infty}(S,M)\times S	\arrow[swap]{l}{\ev} \arrow{d}{\pr_1} \arrow{r}{\pr_2}		&  S \\
	& C^{\infty}(S,M).										&
\end{tikzcd}
\]
We define the hat product \( h \hatProduct g \) of \( h \in \widehat H^\ell(M,\TT) \) and \( g \in \widehat H^r(S,\TT) \) as the 
differential character on \( C^\infty(S, M) \) of degree \( 1 +\ell + r - \dim S \) obtained by pulling back \( h \) and \( g \) to \( C^{\infty}(S,M)\times S \)
and integrating over the fiber \( S \):
\begin{equation}\label{eq:konijn}
	h \hatProduct g = \intFibre_S \ev^* h \ast \pr_2^*\, g.
\end{equation}
Here \( \ast \) denotes the product \( \widehat H^\ell(Q, \TT) \times \widehat H^r(Q, \TT) \to \widehat H^{\ell+r +1}(Q, \TT) \) of differential characters, see~\cite[Def.~28]{BB}, and \( \intFibre_S\: \widehat H^\star(Q \times S, \TT) \to \widehat H^{\star-\dim S}(Q, \TT) \) denotes fiber integration, see~\cite[Def.~38]{BB}.
Since the curvature map intertwines these operations with the wedge product and with fiber integration of differential forms, respectively, we obtain
\begin{equation}\label{commuteswithoperations}
	\curv (h \hatProduct g) = \intFibre_S \ev^* \curv (h) \wedge \pr_2^* \curv (g).
\end{equation}
This expression coincides with the hat product \( \curv h \hatProduct \curv g \) of ordinary differential forms as introduced in~\cite{Vizman2}.

Note that $\Diff(M)$ acts on $C^{\infty}(S,M)$ by $A(\phi)(f) = \phi \circ f$, and that  
$\Diff(S)$ acts by $B(\psi)(f) = f \circ \psi^{-1}$.
Let \( \Diff(S)_+ \) be the subgroup of \( \Diff(S) \) that consists of orientation-preserving diffeomorphisms.

\begin{proposition}\label{prop:transgress}
The hat product is equivariant with respect to $\Diff(M)$ and $\Diff(S)_+$, that is,
$A(\phi)^* (h \hatProduct g) = (\phi^*h) \hatProduct g$ and $B(\psi)^* (h \hatProduct g) = h \hatProduct (\psi^* g)$.
\end{proposition}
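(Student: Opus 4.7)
The plan is to reduce both identities to the naturality of the two operations used in defining the hat product: the product $\ast$ of differential characters and fiber integration $\intFibre_S$, both of which are natural with respect to pullback (cf.~\cite{BB}). For each of the two actions, I will produce a lift to $C^\infty(S,M) \times S$ whose behavior with respect to $\ev$ and $\pr_2$ can be read off by a direct computation, and then simply unfold the definition \eqref{eq:konijn}.

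For the first identity, consider the product map $A(\phi) \times \id_S \colon C^\infty(S,M) \times S \to C^\infty(S,M) \times S$. Elementary computation gives
\[
\ev \circ (A(\phi) \times \id_S) = \phi \circ \ev, \qquad \pr_2 \circ (A(\phi) \times \id_S) = \pr_2,
\]
while $\pr_1 \circ (A(\phi) \times \id_S) = A(\phi) \circ \pr_1$ exhibits the map as a bundle morphism over $A(\phi)$ whose fiber restriction is $\id_S$. Naturality of $\ast$ yields $(A(\phi) \times \id_S)^*(\ev^* h \ast \pr_2^* g) = \ev^* \phi^* h \ast \pr_2^* g$, and naturality of fiber integration gives $A(\phi)^* \intFibre_S = \intFibre_S (A(\phi) \times \id_S)^*$. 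Combining these two facts with \eqref{eq:konijn} produces $A(\phi)^*(h \hatProduct g) = (\phi^* h) \hatProduct g$.

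For the second identity, the natural lift is $\tilde B(\psi) \defeq B(\psi) \times \psi$. The defining property $B(\psi)(f) \circ \psi = f$ translates into $\ev \circ \tilde B(\psi) = \ev$, while $\pr_2 \circ \tilde B(\psi) = \psi \circ \pr_2$ and $\pr_1 \circ \tilde B(\psi) = B(\psi) \circ \pr_1$. Thus $\tilde B(\psi)$ is a bundle morphism over $B(\psi)$ whose fiber restriction is $\psi$ itself, which is orientation-preserving precisely because $\psi \in \Diff(S)_+$. Naturality of $\ast$ now yields $\tilde B(\psi)^*(\ev^* h \ast \pr_2^* g) = \ev^* h \ast \pr_2^* \psi^* g$, and naturality of fiber integration (applied to the bundle morphism $\tilde B(\psi)$) turns this into $B(\psi)^*(h \hatProduct g) = h \hatProduct (\psi^* g)$.

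The only non-formal input is the naturality of $\intFibre_S$ along a bundle morphism whose fiber restriction is an arbitrary orientation-preserving diffeomorphism, as opposed to merely the identity of a pullback square. This should be the one step that requires genuine work, but it is already built into the fiber integration machinery of~\cite{BB}; the orientation-preserving hypothesis is essential, since otherwise fiber integration picks up a sign and the identity fails. Everything else is a bookkeeping exercise in the commutativity of the diagrams above.
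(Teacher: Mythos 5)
Your proposal is correct and follows essentially the same route as the paper: both identities are reduced to the commutativity of the diagrams built from $A(\phi)\times\id_S$ and $B(\psi)\times\psi$ over $\ev$ and $\pr_2$, combined with the naturality of the character product and of fiber integration, the latter invoked for orientation-preserving fiber diffeomorphisms in the second case. Your explicit verification of the intertwining relations (e.g.\ $\ev\circ(B(\psi)\times\psi)=\ev$) and your remark isolating the invariance of $\intFibre_S$ under orientation-preserving diffeomorphisms as the one non-formal input match the paper's argument exactly, only spelled out in more detail.
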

\begin{proof}
Note that fiber integration as well as the product of differential characters are natural operations~\cite[Def.~28 and~38]{BB}.
The first equality follows from commutativity of the diagram
\begin{equation}
\xymatrix{
M \ar[d]^{\ph}  & C^{\infty}(S,M)\times S \ar[d]^{A(\ph) \times \id}\ar[r]^-{\pr_2} \ar[l]_-{\ev}
& S \ar[d]^{\id}\\
M  &C^{\infty}(S,M)\times S \ar[r]^-{\pr_2} \ar[l]_-{\ev}& S\,.\\
}
\end{equation}
The second equality is derived from a similar commutative diagram
\begin{equation}
\xymatrix{
M \ar[d]^{\id} & C^{\infty}(S,M)\times S \ar[d]^{B(\psi) \times \psi}\ar[r]^-{\pr_2} \ar[l]_-{\ev}
& S \ar[d]^{\psi}\\
M &C^{\infty}(S,M)\times S \ar[r]^-{\pr_2} \ar[l]_-{\ev} & S\,,\\
}
\end{equation}
together with the invariance of fiber integration under orientation-preserving diffeomorphisms.
\end{proof}

Assume the degrees satisfy the relation \( \ell + r = \dim S \).
Then, by \cref{thm:H1isbundles}, the resulting differential character \( h \hatProduct g \in \widehat H^1(C^\infty(S, M),\TT) \) can be represented by a circle bundle \( \P \) over $C^{\infty}(S,M)$ with connection \( \nabla \).
From~\eqref{commuteswithoperations}, one sees that the curvature of $\nabla$ is 
$\alpha \hatProduct \beta \in \Omega^2_{\Z}(C^{\infty}(S,M))$, where $\alpha = \curv(h)$ and 
$\beta = \curv(g)$.
By \cref{prop:transgress}, the action $A(\phi)$ of 
$\phi \in \Diff(M,h)$ on 
$C^{\infty}(S,M)$ preserves the differential character $h \hatProduct g$ 
and thus the holonomy 
of $(\P,\nabla)$. The same holds for the action $B(\psi)$
of $\psi \in \Diff_+(S, g)$. 
By \cref{thm:H1isbundles}, the pull-back bundles
$A(\phi)^*(\P,\nabla)$ and $B(\psi)^*(\P,\nabla)$ are 
isomorphic to $(\P,\nabla)$, 
so there exist connection-preserving bundle automorphisms 
$\widehat{A}(\phi) \colon \P \rightarrow \P$ and
$\widehat{B}(\psi) \colon \P \rightarrow \P$
that cover $A(\phi)$ and $B(\psi)$.
The following theorem summarizes the discussion so far.
\begin{theorem}
	\label{prop::prequantumBundleOnFunctionSpace}
	Let $M$ and $S$ be finite dimensional manifolds, with $S$ compact and oriented.
	Let \( h \in \widehat H^\ell(M,\TT) \) and \( g \in \widehat H^r(S,\TT) \) be differential characters with curvature forms $\alpha \in \Omega^{\ell+1}_{\Z}(M)$ and $\beta \in \Omega^{r+1}_{\Z}(S)$, respectively.
	Assume that \( \ell + r = \dim S \).
	Then the differential character \( {h \hatProduct g} \in \widehat H^1(C^\infty(S, M),\TT) \) is the holonomy of a circle bundle \( \P \) over $C^{\infty}(S,M)$ with connection \( \nabla \)
	and curvature $\alpha \hatProduct \beta \in \Omega^2_{\Z}(C^{\infty}(S,M))$.
	Moreover, for every \( \phi \in \Diff(M,h) \) and \( \psi \in \Diff_+(S, g) \), there exist bundle automorphisms \( \widehat{A}(\phi)\) 
	and \(\widehat{B}(\psi)\) in \( \AutGroup(\P, \nabla) \) that cover 
	\( A(\phi), B(\psi) \in \Diff(C^\infty(S, M)) \).
\end{theorem}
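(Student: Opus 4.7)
The plan is to assemble the theorem directly from the exact sequence \eqref{eq:ch-seq}, Proposition \ref{thm:H1isbundles}, and Proposition \ref{prop:transgress}; no essentially new construction is required and the work is largely organizational.

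First, I would check that $h \hatProduct g$ has the claimed degree and curvature. By definition \eqref{eq:konijn}, the character $\ev^* h * \pr_2^* g$ lives in degree $\ell + r + 1$ on $C^\infty(S,M) \times S$, so after fiber integration over $S$ its degree drops by $\dim S$, giving $1 + \ell + r - \dim S = 1$ once the hypothesis $\ell + r = \dim S$ is used. Its curvature, computed in \eqref{commuteswithoperations}, is exactly $\alpha \hatProduct \beta$, which lies in $\Omega^2_{\Z}(C^\infty(S,M))$ by the curvature side of \eqref{eq:1} applied on the mapping space.

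Second, I would apply Proposition \ref{thm:H1isbundles} to the degree-one character $h \hatProduct g \in \widehat H^1(C^\infty(S,M),\TT)$ to produce the circle bundle $(\P,\nabla)$ whose holonomy realizes it; the curvature of $\nabla$ is then automatically $\alpha \hatProduct \beta$. Here one should note that Proposition \ref{thm:H1isbundles} is formulated for locally convex manifolds for which the de Rham isomorphism holds, and $C^\infty(S,M)$ with $S$ compact and $M$ finite-dimensional is such a Fr\'echet manifold, so the proposition is available in this infinite-dimensional setting.

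Third, for the equivariance assertion, fix $\phi \in \Diff(M,h)$. Proposition \ref{prop:transgress} gives
\begin{equation*}
A(\phi)^*(h \hatProduct g) = (\phi^* h) \hatProduct g = h \hatProduct g,
\end{equation*}
since $\phi^* h = h$. Consequently the pullback bundle with connection $A(\phi)^*(\P,\nabla)$ has the same holonomy as $(\P,\nabla)$, so the injectivity part of Proposition \ref{thm:H1isbundles} produces a connection-preserving bundle isomorphism $A(\phi)^*(\P,\nabla) \cong (\P,\nabla)$; composing with the canonical map $A(\phi)^*\P \to \P$ covering $A(\phi)$ yields the required $\widehat A(\phi) \in \Aut(\P,\nabla)$ covering $A(\phi)$. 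An identical argument, now using $\psi^* g = g$ together with the invariance of fiber integration under orientation-preserving diffeomorphisms (which is why $\Diff_+(S,g)$ rather than $\Diff(S,g)$ appears), yields $\widehat B(\psi) \in \Aut(\P,\nabla)$ covering $B(\psi)$.

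The only genuine concern is the applicability of Proposition \ref{thm:H1isbundles} to the Fr\'echet manifold $C^\infty(S,M)$, both for the existence of $(\P,\nabla)$ and for the uniqueness-up-to-isomorphism used in producing the lifts. Since this is settled by the de Rham theorem of Kriegl--Michor cited in Section \ref{s1}, no further obstacle arises and the theorem follows.
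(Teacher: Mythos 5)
Your proposal is correct and follows essentially the same route as the paper: the theorem is stated there as a summary of the preceding discussion, which likewise checks the degree via \eqref{eq:konijn}, reads off the curvature from \eqref{commuteswithoperations}, invokes \cref{thm:H1isbundles} to realize \( h \hatProduct g \) as the holonomy of \( (\P,\nabla) \), and uses \cref{prop:transgress} together with the injectivity part of \cref{thm:H1isbundles} to produce the lifts \( \widehat A(\phi) \) and \( \widehat B(\psi) \). Your remark about needing the de Rham isomorphism on the Fr\'echet manifold \( C^\infty(S,M) \) is a point the paper handles only implicitly through the standing hypotheses of \cref{s1}, so flagging it is appropriate.
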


\section{Central extensions of \texorpdfstring{$\Diff_{\ex}(M,\alpha)$
and $\Diff_{\ex}(S,\beta)$}{subgroups of the group of exact diffeomorphisms}}

We continue in the setting of \cref{prop::prequantumBundleOnFunctionSpace}, and 
use the bundle $(\cP,\nabla)$ 
to construct central extensions of $\Diff_{\ex}(M,\alpha)$
and $\Diff_{\ex}(S,\beta)$.
For $\Phi \in C^{\infty}(S,M)$, consider the restriction $\P_{\Phi} \rightarrow C^{\infty}(S,M)_{\Phi}$ of 
$\P \rightarrow C^{\infty}(S,M)$ to the connected component
of $\Phi$.
Denote by $\Diff(M,h,[\Phi]) \subseteq \Diff(M,h)$ and
$\Diff(S, g, [\Phi]) \subseteq \Diff_+(S, g)$ the subgroups that 
fix the homotopy class $[\Phi] \in \pi_{0}(C^{\infty}(S,M))$.
By continuity, these contain 
$\Diff_{\ex}(M,\alpha) \subseteq \Diff(M,h)$
and 
$\Diff_{\ex}(S, \beta) \subseteq \Diff(S, g)$.
The pull-back 
of the group extension 
\[
\TT \rightarrow \mathrm{Aut}(\P_{\Phi},\nabla) \rightarrow \Diff(C^{\infty}(S,M)_{\Phi})
\]
along the action 
$A \colon \Diff(M,h,[\Phi]) \rightarrow \Diff(C^{\infty}(S,M)_{\Phi})$
yields a 
central extension 
\begin{equation}\label{eq:extensionL}
\TT \rightarrow \widehat \Diff(M,h,[\Phi])\rightarrow \Diff(M,h,[\Phi])\,.
\end{equation}
Similarly, the pull-back along 
$B \colon \Diff(S,g,[\Phi]) \rightarrow \Diff(C^{\infty}(S,M)_{\Phi})$
yields the 
central extension 
\begin{equation}\label{eq:extensionR}
\TT \rightarrow \widehat \Diff(S,g,[\Phi]) \rightarrow \Diff(S,g, [\Phi])\,.
\end{equation}


\subsection{Smooth Lie group extensions}
We would like to say that these are \emph{smooth} extensions of Lie groups.
But, unfortunately, 
we are not aware of any Fr\'echet--Lie group structure on 
$\Diff(M,h,[\Phi])$ and $\Diff(S,g, [\Phi])$ except when
\( \alpha \) and \( \beta \) are either symplectic or volume forms and $M$ is compact.

In general, we therefore formulate the smoothness requirement as follows.
Consider a smooth action of a Fr\'echet--Lie group \( G \) on \( M \) that 
preserves \( h \) and \( [\Phi] \); similarly, let $H$ be a Fr\'echet--Lie group which acts smoothly on \( S \) and preserves \( g \) as well as \( [\Phi] \).
We denote the corresponding group homomorphisms by $i_{G} \colon G \to \Diff(M,h,[\Phi])$
and $i_{H} \colon H \to \Diff(S,g,[\Phi])$.
By \cref{gene}, the pull-back along $i_{G}$ and $i_{H}$
of the 
central extensions~\eqref{eq:extensionL} and~\eqref{eq:extensionR}
%
then yields \emph{smooth} Lie group extensions
\begin{equation}
	\label{eq:extensionsLiePrelim}
	\T \rightarrow \widehat{G} \rightarrow G
	\quad
	\text{and}
	\quad
	\T \rightarrow \widehat{H} \rightarrow H, 
\end{equation}
respectively.


Let $a \colon \X_{\ex}(M,\alpha) \to T_{\Phi}C^{\infty}(S,M)$ and $b \colon \X_{\ex}(S,\beta) \to T_{\Phi}C^{\infty}(S,M)$ be the infinitesimal versions at $\Phi$ of the actions \( A \) and \( B \), respectively.
That is, we have
\begin{equation}
	\label{eq:actionsInf}
	a(X) = X \circ \Phi, \qquad b(v) = -\Phi_*v .
\end{equation}
From \cref{prop::prequantumBundleOnFunctionSpace} and equation~\eqref{commuteswithoperations}
(\cf also~\cite[Eq.~2]{Vizman2}), one finds that the curvature of $\nabla$ at $\Phi \in C^{\infty}(S,M)$
is $(\alpha \hatProduct \beta)_{\Phi} \in \Alt^2 (T_{\Phi}C^{\infty}(S,M))$, with
\begin{equation}
	(\alpha \hatProduct \beta)_{\Phi}(U,V) = \int_{S}\Phi^* \bigl(i_{V}i_{U}(\alpha \circ \Phi)\bigr)\wedge \beta \, .
\end{equation}
Here, \( \Phi^* \bigl(i_{U}i_{V}(\alpha \circ \Phi)\bigr) \) is the differential \( (r-1) \)-form on $S$ assigning to $v_1, \dotsc, v_{r-1} \in T_s S$ the value 
\[ \alpha_{\Phi(s)}
\bigl(U(s), V(s), \Phi_* v_1, \dotsc, \Phi_* v_{r-1}\bigr).\]
The pull-back along $a$ and $b$ of the 
curvature $(\alpha \hatProduct \beta)_{\Phi}$ 
yields the Lie algebra 2-cocycles $\tau$ on $\X_{\ex}(M,\alpha)$ and 
$\nu$ on $\X_{\ex}(S,\beta)$ given by 
 \begin{equation}
\tau(X,Y) = \int_{S} (\Phi^* i_{Y}i_{X}\alpha) \wedge \beta\,,
\qquad
\nu(u, v) = \int_S (i_v i_u \Phi^* \alpha) \wedge \beta\,.
\end{equation}
For the former identity, we use that, for $U = a(X)$ and $V = a(Y)$, \cref{eq:actionsInf} yields
$\Phi^* \bigl(i_V i_U (\alpha \circ \Phi)\bigr) = \Phi^* i_{Y}i_{X}\alpha$. 
For the latter, note that the
fundamental vector fields $U = b(u)$ and \( V = b(v)\) give $\Phi^*\bigl(i_{V}i_{U}(\alpha \circ \Phi)\bigr) = i_{v} i_u \Phi^*\alpha$ according to \cref{eq:actionsInf}.
The Lie algebra cocycles on $\fg = \mathrm{Lie}(G)$ and $\fh = \mathrm{Lie}(H)$ corresponding to Lie group extensions~\eqref{eq:extensionsLiePrelim} are given by the pull-back along the Lie algebra actions $i_{\fg} \colon \fg \to \X_{\ex}(M,\alpha)$ and $i_{\fh} \colon \fh \to \X_{\ex}(S,\beta)$ of the cocycles $\tau$ and $\nu$.
In summary, we obtain the following.
\begin{theorem}
\label{prop::centralExtensionOfDiffeoGroups}
Let $M$ and $S$ be finite dimensional manifolds, and
let $h \in \widehat{H}^{\ell}(M,\TT)$ and 
$g \in \widehat{H}^r(S, \TT)$ be differential characters 
with curvature $\alpha \in \Omega_{\Z}^{\ell + 1}(M)$ 
and $\beta \in \Omega^{r+1}_{\Z}(S)$, respectively.
Assume that $S$ is compact and oriented, and that $\ell + r = \mathrm{dim}(S)$.
Let $i_{G} \colon G \to \Diff(M,h,[\Phi])$ and 
$i_{H} \colon H \to \Diff(S,g,[\Phi])$
be Fr\'echet--Lie groups that act smoothly on $M$ and~$S$, 
in such a way that $h$, $g$ and $[\Phi] \in \pi_{0}(C^{\infty}(S,M))$ are preserved.
Then the pull-back to $G$ and $H$ of~\eqref{eq:extensionL} and~\eqref{eq:extensionR} yields smooth central extensions 
\begin{equation}
\T \rightarrow \widehat{G} \rightarrow G\,, \qquad 
\T \rightarrow \widehat{H} \rightarrow H
\end{equation}
of Fr\'echet--Lie groups.
The corresponding Lie algebra $2$-cocycles 
are given by the pull-back along the infinitesimal action 
$i_{\fg} \colon \fg \to \X_{\ex}(M,\alpha)$ and 
$i_{\fh} \colon \fh \to \X_{\ex}(S,\beta)$ of the 
continuous Lie algebra 
2-cocycles
\begin{align}\label{eq:LAcocexM}
\tau \colon {\textstyle \bigwedge^2}\X_{\ex}(M,\alpha)\rightarrow \R, \quad   &\tau(X,Y) = \int_{S} (\Phi^*i_{Y}i_{X}\alpha) \wedge \beta\,,\\
\nu \colon {\textstyle \bigwedge^2}\X_{\ex}(S,\beta)\rightarrow \R, \quad   &\nu(u,v) = \int_{S} (i_{v}i_{u}\Phi^*\alpha) \wedge \beta = 
\int_{S} \Phi^*\alpha \wedge i_{v}i_{u}\beta
\,.\label{eq:LAcocexS}
\end{align}
\qedhere
\end{theorem}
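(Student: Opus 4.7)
My approach is to assemble the theorem from three ingredients already in place in the excerpt. The first is Theorem~\ref{prop::prequantumBundleOnFunctionSpace}, which produces the principal circle bundle $(\P,\nabla)$ over $C^\infty(S,M)$ with curvature $\alpha\hatProduct\beta$ and guarantees that every $\phi \in \Diff(M,h)$ and every $\psi \in \Diff_+(S,g)$ has a connection-preserving lift. The second is the standard central extension $\T \to \Aut(\P_\Phi,\nabla) \to \Diff(C^\infty(S,M)_\Phi)$ of automorphisms covering base diffeomorphisms, whose pullbacks along $A$ and $B$ give the \emph{abstract} extensions~\eqref{eq:extensionL} and~\eqref{eq:extensionR}. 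The third is the cited \cref{gene}, which upgrades pullbacks of such prequantum central extensions along smooth Fr\'echet--Lie group actions to \emph{smooth} central extensions.

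Concretely, I would first observe that, since $G$ and $H$ preserve $[\Phi]$, their actions descend to smooth actions on $C^\infty(S,M)_\Phi$, and hence lift through $\Aut(\P_\Phi,\nabla)$ by \cref{prop::prequantumBundleOnFunctionSpace}. Pulling back the extension $\T \to \Aut(\P_\Phi,\nabla) \to \Diff(C^\infty(S,M)_\Phi)$ along $A\circ i_G$ and $B\circ i_H$ produces $\T \to \widehat{G} \to G$ and $\T \to \widehat{H}\to H$ as abstract central extensions. Applying \cref{gene} to each of these pullbacks endows them with the Fr\'echet--Lie group structure required by the statement.

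For the cocycles, I would invoke the general principle that for a smooth central extension arising from a prequantization $(\P,\nabla)$ with curvature $\Omega$, the Lie algebra $2$-cocycle is the pullback along the infinitesimal action of $\Omega_\Phi$ evaluated on fundamental vector fields. Here the curvature $2$-form at $\Phi$ is
\begin{equation*}
(\alpha \hatProduct \beta)_{\Phi}(U,V) = \int_S \Phi^*\bigl(i_V i_U (\alpha \circ \Phi)\bigr) \wedge \beta,
\end{equation*}
as recalled just before the theorem. Inserting $U = a(X) = X \circ \Phi$ and $V = a(Y) = Y \circ \Phi$ from~\eqref{eq:actionsInf} and observing that the pointwise double contraction satisfies $\Phi^*\bigl(i_{Y\circ\Phi} i_{X\circ\Phi}(\alpha \circ \Phi)\bigr) = \Phi^* i_Y i_X \alpha$ gives the formula for $\tau$. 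For $\nu$, substituting $U = b(u) = -\Phi_* u$ and $V = b(v) = -\Phi_* v$ into the same expression yields $\Phi^*\bigl(i_V i_U (\alpha \circ \Phi)\bigr) = i_v i_u \Phi^*\alpha$, the two minus signs cancelling. The alternative presentation $\int_S \Phi^*\alpha \wedge i_v i_u \beta$ then follows from the identity $i_u i_v(\Phi^*\alpha \wedge \beta) = 0$ (the bracketed form has degree $\dim S + 2$ on $S$) together with the graded Leibniz rule for contractions; the remaining sign bookkeeping is routine and mirrors the transgression formulas of~\cite{Vizman2}.

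The step I expect to be the genuine obstacle is the smoothness assertion: the cocycle calculation is pointwise and reduces to unpacking the definition of the hat product together with~\eqref{eq:actionsInf}, but promoting the abstract pullback of $\Aut(\P_\Phi,\nabla)$ to a Fr\'echet--Lie group extension demands a \emph{smooth} local section, equivalently a smoothly parametrised family of connection-preserving lifts of a smooth family of diffeomorphisms of $C^\infty(S,M)_\Phi$. This regularity issue is precisely what the cited \cref{gene} is designed to handle; without appealing to it, one would need to build such sections explicitly, which is where any real analytic content of the argument lies.
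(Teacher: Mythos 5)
Your proposal is correct and follows essentially the same route as the paper: the theorem is assembled from Theorem~\ref{prop::prequantumBundleOnFunctionSpace} (the bundle $(\P,\nabla)$ and the lifts $\widehat{A}(\phi)$, $\widehat{B}(\psi)$), the pullback of $\T \to \Aut(\P_\Phi,\nabla) \to \Diff(C^\infty(S,M)_\Phi)$ along $A\circ i_G$ and $B\circ i_H$, the appeal to \cref{gene} for smoothness, and the substitution of the fundamental vector fields $a(X)=X\circ\Phi$ and $b(v)=-\Phi_*v$ into the curvature $(\alpha\hatProduct\beta)_\Phi$ to obtain $\tau$ and $\nu$. You also correctly locate the real analytic content in \cref{gene}, exactly as the paper does.
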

Note that although the Lie algebra cocycles $\tau$ and $\nu$ (and, hence, the corresponding Lie algebra extensions) depend only on 
the forms $\alpha$ and $\beta$, the group extensions will in general depend 
on the choice of integrating differential characters $h$ and $g$.

\subsection{Extensions at the Lie algebra level}\label{subsec:ExtLAlevel}

\Cref{prop::centralExtensionOfDiffeoGroups} yields cocycles \( \tau \) and \( \nu \) on the Lie algebras \( \X_{\ex}(M,\alpha) \) and \( \X_{\ex}(S,\beta) \), respectively, 
that give rise to Lie group extensions.
We will now show that the Lie algebra extensions 
obtained in this way all factor through a common type of central extension.

Let $\Omega^{k}_{\mathrm{ex}}(M)$ 
denote the space of exact $k$-forms, and let
\[
\overline{\Omega}{}^{k}(M) \defeq \Omega^{k}(M)/\Omega^{k}_{\ex}(M)\,.
\]
For every closed $(k+1)$-form $\omega$, we define
\begin{equation}
	\hat\X_{\ex}(M,\omega)
		\defeq \set*{\bigl(v,[\psi_v]\bigr) \in \X_{\ex}(M,\omega) \times \overline{\Omega}{}^{k-1}(M)\given i_{v}\omega = \dif \psi_v }
\end{equation}
and denote the projection on the first factor by $\pi$.
If \( \omega \) is non-degenerate, then \( \hat\X_{\ex}(M,\omega) \) is canonically isomorphic to \( \overline{\Omega}{}^{k}(M) \).

\begin{proposition}\label{prop:ceLA} 
Let $M$ be a finite dimensional manifold, and let $\omega$ be a closed $(k+1)$-form on $M$.
With the Lie bracket on $\hat\X_{\ex}(M,\omega)$ defined by  
\begin{equation}\label{eq:bracketext}
\bigl[\bigl(v,[\psi_v]\bigr),\bigl(w,[\psi_{w}]\bigr)\bigr] \defeq \bigl([v,w], [i_v i_w \omega]\bigr) = \bigl([v,w], [L_v \psi_w]\bigr) \, ,
\end{equation}
the projection \( \pi \) yields a central extension
\begin{equation}\label{eq:extensionLA}
0 \rightarrow H^{k-1}(M,\R) \longrightarrow \hat\X_{\ex}(M,\omega) \stackrel{\pi}{\longrightarrow} 
\X_{\ex}(M,\omega) \rightarrow 0\,,
\end{equation}
of Fr\'echet--Lie algebras.
\end{proposition}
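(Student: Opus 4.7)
The plan is to verify in sequence that the bracket \eqref{eq:bracketext} is well-defined, that it lands in $\hat\X_{\ex}(M,\omega)$, that it satisfies Jacobi, and finally that the short sequence at the level of underlying vector spaces is exact with the projection $\pi$ continuous. The identity $[i_v i_w\omega] = [L_v\psi_w]$ in $\overline\Omega^{k-1}(M)$ follows immediately from Cartan's formula
\[
L_v\psi_w = i_v\, \dif\psi_w + \dif\, i_v\psi_w = i_v i_w\omega + \dif\, i_v\psi_w,
\]
which also shows that the right-hand side of \eqref{eq:bracketext} is independent of the choice of representative $\psi_w$ (the ambiguity lies in $\Omega^{k-2}(M)$, on which $L_v$ maps into exact forms). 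Similarly, the expression $[i_v i_w\omega]$ manifestly does not depend on the choice of $\psi_v,\psi_w$. To see that the bracket lies in $\hat\X_{\ex}(M,\omega)$, I would use $\dif\omega = 0$ and $L_v\omega = L_w\omega = 0$ together with $\dif\, i_w\omega = L_w\omega - i_w\dif\omega = 0$ to compute $\dif\, i_v i_w\omega = L_v i_w\omega = i_{[v,w]}\omega$, so that $(v,w) \mapsto [v,w]$ is exact with $\psi_{[v,w]} = i_v i_w\omega$.

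For the Jacobi identity, after noting that the first component reduces to the Jacobi identity on $\X(M)$, the task is to show that the cyclic sum
\[
i_u i_{[v,w]}\omega + i_v i_{[w,u]}\omega + i_w i_{[u,v]}\omega
\]
is exact. Substituting $i_{[v,w]}\omega = \dif\, i_v i_w\omega$ and applying $i_u$ via $i_u\dif = L_u - \dif\, i_u$, the cyclic sum rewrites as $\sum_{\mathrm{cyc}} L_u i_v i_w\omega - 3\,\dif\,i_u i_v i_w\omega$. Using that $L$ is a derivation and $L\omega = 0$, the first term expands to a combination that, after applying anticommutativity $i_a i_b = -i_b i_a$ and relabeling, equals $-2\sum_{\mathrm{cyc}} i_u i_{[v,w]}\omega$. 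Solving yields $\sum_{\mathrm{cyc}} i_u i_{[v,w]}\omega = -\dif\,i_u i_v i_w\omega$, which is exact and hence zero in $\overline\Omega^{k-1}(M)$.

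Exactness of the sequence \eqref{eq:extensionLA} is then straightforward: $\pi$ is surjective by the very definition of $\X_{\ex}(M,\omega)$; its kernel consists of pairs $(0,[\psi])$ with $i_0\omega = 0 = \dif\psi$, so $[\psi]$ runs through closed $(k-1)$-forms modulo exact ones, i.e.\ $H^{k-1}(M,\R)$. Centrality follows because, for $(0,[\eta])$ with $\eta$ closed, the bracket with any $(v,[\psi_v])$ has first component $[v,0]=0$ and second component $[L_v\eta] = [\dif\, i_v\eta] = 0$ in $\overline\Omega^{k-1}(M)$.

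Regarding the Fréchet–Lie algebra structure, $\X_{\ex}(M,\omega)$ is closed in the Fréchet space $\X(M,\omega)$ as the kernel of the continuous infinitesimal flux cocycle \eqref{infl}. The quotient $\overline\Omega^{k-1}(M) = \Omega^{k-1}(M)/\Omega^{k-1}_{\ex}(M)$ inherits a locally convex topology, and $\hat\X_{\ex}(M,\omega)$, as a closed subset of the product cut out by the continuous relation $i_v\omega = \dif\psi_v$, becomes a locally convex (Fréchet, in the compact case where Hodge theory closes the exact forms) space. Continuity of the bracket follows from continuity of the Lie bracket on $\X(M)$, of the insertion operator, and of the projection to $\overline\Omega^{k-1}(M)$. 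The main obstacle is the Jacobi computation above; everything else reduces to bookkeeping and the well-known Cartan calculus.
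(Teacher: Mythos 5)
Your argument is correct in substance and follows the same overall outline as the paper's proof (well-definedness, closure of the bracket, Jacobi identity, exactness and centrality, topology), but your treatment of the Jacobi identity takes a genuinely different route. The paper rewrites the cyclic sum $i_u i_{[v,w]}\omega + i_v i_{[w,u]}\omega + i_w i_{[u,v]}\omega$ in terms of a chosen primitive $\psi_u$ of $i_u\omega$ and reduces exactness to the operator identity $L_{[v,w]} = L_vL_w - L_wL_v$, producing a primitive $(i_wL_v - i_vL_w + i_{[v,w]})\psi_u$ that depends on the choice of $\psi_u$. You instead substitute $i_{[v,w]}\omega = \dif(i_v i_w\omega)$ and work with Cartan calculus on $\omega$ alone, arriving at the explicit symmetric identity $\sum_{\mathrm{cyc}} i_u i_{[v,w]}\omega = -\dif(i_u i_v i_w\omega)$. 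Both intermediate steps check out: the three cyclic triple insertions $i_u i_v i_w\omega$, $i_v i_w i_u\omega$, $i_w i_u i_v\omega$ coincide by antisymmetry, and expanding $L_u(i_v i_w\omega)$ as a derivation (using $L_u\omega=0$) does give $\sum_{\mathrm{cyc}} L_u i_v i_w\omega = -2\sum_{\mathrm{cyc}} i_u i_{[v,w]}\omega$. Your version is slightly stronger than what the paper records, since it exhibits a canonical primitive independent of the choices of $\psi_u,\psi_v,\psi_w$; it also specializes correctly to the familiar vanishing of $\sum_{\mathrm{cyc}}\omega([v,w],u)$ in the symplectic case.

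The one point where you fall short of the statement as given is the Fr\'echet structure for noncompact $M$: you only secure a Fr\'echet space ``in the compact case where Hodge theory closes the exact forms,'' whereas the proposition asserts a central extension of Fr\'echet--Lie algebras for every finite-dimensional (hence, by the paper's standing conventions, second countable) manifold. The paper closes this without Hodge theory: by the de Rham isomorphism, a $(k-1)$-form is exact if and only if it integrates to zero over all cycles, so $\Omega^{k-1}_{\ex}(M)$ is an intersection of kernels of continuous linear functionals and is therefore closed in $\Omega^{k-1}(M)$; the quotient $\overline{\Omega}{}^{k-1}(M)$ is then Fr\'echet, and $\hat\X_{\ex}(M,\omega)$ is Fr\'echet as the kernel of the continuous linear map $(v,[\psi])\mapsto i_v\omega - \dif\psi$. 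Replacing your appeal to Hodge theory by this observation removes the compactness restriction and completes the proof.
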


\begin{proof} (\cf~\cite[Thm.~13]{Ne05}, \cite[Rem.~1.11]{NV10}) 
To see that~\eqref{eq:bracketext} defines an element of 
$\hat\X_{\ex}(M,\omega)$, 
note that $L_{v}\omega = 0$ and that $i_w\omega$ is closed, so that
\[i_{[v,w]}\omega = L_{v}i_w \omega - i_w L_v \omega =
\dif (i_v i_w \omega) + i_v \dif (i_w\omega) = \dif (i_v i_w \omega)\,.\]
For the second equality in~\eqref{eq:bracketext}, note that
$L_{v}\psi_{w} = i_v i_w \omega +  \dif (i_v \psi_w)$.

By the de Rham isomorphism, a $(k-1)$-form $\gamma$ is exact if and only if 
it integrates to zero on all closed cycles. It follows that $\Omega^{k-1}_{\ex}(M)$
is a closed subspace of the Fr\'echet space $\Omega^{k-1}(M)$, and so the quotient 
$\overline{\Omega}{}^{k-1}(M)$ is a Fr\'echet space by~\cite[Thm.~1.41]{Rud91}.
Since $\hat\X_{\ex}(M,\omega)$ is the kernel of the continuous linear map 
\[\X(M) \times \overline{\Omega}{}^{k-1}(M) \rightarrow \Omega^{k}(M), \quad (v,[\psi]) \mapsto i_{v}\omega - \dif \psi \,,\]
it is a closed subspace of a Fr\'echet space, hence it is Fr\'echet itself. 
The continuity of the bracket and projection follows from the explicit description.

It remains to show that the bracket is a Lie bracket.
It is manifestly skew-symmetric. For the Jacobi identity, 
it suffices to show that the form
\[ i_{u}i_{[v,w]}\omega + i_{v}i_{[w,u]}\omega + i_{w}i_{[u,v]}\omega \] 
is exact. 
This expression equals
\begin{gather*}
(i_v i_{[w,u]} - i_{w}i_{[v,u]} - i_{[v,w]}i_{u}) \, \omega 
= (i_{v}L_{w} - i_{w}L_{v} - i_{[v,w]}) \, \dif \psi_{u} \\
=(L_{v}L_{w} - L_{w}L_{v} - L_{[v,w]}) \, \psi_{u} -
\dif (i_{v}L_{w} - i_{w}L_{v} - i_{[v,w]}) \, \psi_{u}\,,
\end{gather*}
where we used the formul\ae{} $i_{[X,Y]} = L_{X}i_{Y} - i_{Y}L_{X}$, $i_{X}\omega = \dif \psi_{X}$, and $L_{X}\omega = 0$
in the first equality, and $L_{X} \dif = \dif L_{X}$ and $i_{X} \dif = L_{X} - \dif i_{X}$ in the second.
Since $L_{[v,w]} = L_{v}L_{w}- L_{w}L_{v}$, we find
\[
i_{u}i_{[v,w]}\omega + i_{v}i_{[w,u]}\omega + i_{w}i_{[u,v]}\omega = 
\dif (i_{w}L_{v} - i_{v}L_{w} + i_{[v,w]}) \, \psi_{u}\,,
\]
so that it defines the zero class in $\overline{\Omega}{}^{k-1}(M)$ as required.
%
\end{proof}

Every continuous linear functional $\lambda \in \hat\X_{\ex}(M,\omega)^*$ 
gives rise to a central 
extension $ \X_{\ex}(M,\omega) \oplus_{\lambda} \R$, with Lie bracket
\begin{equation}
	\label{eq:centralExtensionLambda}
[v\oplus x, w \oplus y] = [v,w] \oplus \lambda\bigl(([v,w], [i_v i_w \omega])\bigr)\,.
\end{equation}
The isomorphism class of the extension obtained in this way depends only on the restriction 
of $\lambda$ to $H^{k-1}(M,\R)$, as 
the homomorphism 
$(\pi,\lambda) \colon \hat\X_{\ex}(M,\omega) \rightarrow \X_{\ex}(M,\omega) \oplus_{\lambda}\R$ 
drops to an isomorphism between
$\hat\X_{\ex}(M,\omega)/\ker(\lambda|_{H^{k-1}(M,\R)})$ and $\X_{\ex}(M,\omega) \oplus_{\lambda}\R$. 
This gives rise to a map 
\begin{equation}
 \Xi_{\omega} \colon H^{k-1}(M,\R)^* \rightarrow H^2(\X_{\ex}(M,\omega),\R).
\end{equation}
Summarizing, the restriction of $\lambda$ to $H^{k-1}(M,\R)$ determines the 1-dimensional central extension,
while $\lambda \in \hat\X_{\ex}(M,\omega)^*$ provides this extension with a splitting.

The central extension $\pi\colon \hat\X_{\ex}(M,\omega) \rightarrow \X_{\ex}(M,\omega)$
is in general not universal.
Its significance lies in the fact that it captures
the central extensions corresponding to the cocycles 
\eqref{eq:LAcocexM} and \eqref{eq:LAcocexS} from
\cref{prop::centralExtensionOfDiffeoGroups}.
For the cocycle $\tau$ on $\X_{\ex}(M,\alpha)$, 
note that  $\pi^*\tau = \difCE \sigma_{\beta}$,
where 
$\difCE$ denotes the differential of the Chevalley-Eilenberg complex, 
$\pi$ is the central extension $\pi\colon \hat\X_{\ex}(M,\alpha)
 \rightarrow \X_{\ex}(M,\alpha)$
of $\X_{\ex}(M,\alpha)$ by $H^{\ell-1}(M,\R)$, and 
$\sigma_{\beta}$ is the 1-cochain on $\hat\X_{\ex}(M,\alpha)$ given by 
\begin{equation}\label{eq:gramschaap}
\sigma_{\beta} \colon \hat\X_{\ex}(M,\alpha)
\rightarrow \R, \qquad \sigma_{\beta}\bigl((X,[\psi_{X}])\bigr) = \int_{S}(\Phi^*\psi_{X}) \wedge \beta \, . 
\end{equation}
Similarly, the cocycle $\nu$ of~\eqref{eq:LAcocexS} trivializes when it is pulled back along 
the extension $\pi\colon \hat\X_{\ex}(S,\beta) \rightarrow \X_{\ex}(S,\beta)$, as $\pi^*\nu = \difCE\bar{\sigma}_{\alpha}$
for the Lie algebra 1-cochain
\begin{equation}
\bar{\sigma}_{\alpha} \colon \hat\X_{\ex}(S,\beta) 
 \rightarrow \R, \qquad \bar{\sigma}_{\alpha} \bigl((u,[\psi_{u}])\bigr) = \int_{S}(\Phi^*\alpha)\wedge \psi_{u}\,. 
\end{equation}

\begin{proposition}\label{prop:twodiagrams}
The isomorphism class of the Lie algebra extension of 
$\X_{\ex}(M,\alpha)$ corresponding to the 2-cocycle $\tau$
is determined by the restriction 
of $\sigma_{\beta}$ to $H^{\ell-1}(M,\R) \subseteq \widehat{\X}_{\ex}(M,\alpha)$.
Similarly, the isomorphism class of the Lie algebra extension of $\X_{\ex}(S,\beta)$
corresponding to $\nu$ is determined by the restriction of 
$\bar{\sigma}_{\alpha}$ to $H^{r -1}(S,\R)$.
In other words,
the following diagrams commute.
\begin{equation*}
\xymatrix{
	\Omega_{\Z}^{r+1}(S)\ar[dr]_{\beta \mapsto \sigma_{\beta}}\ar[r]^{\beta \mapsto [\tau]\qquad} 
	&H^2(\X_{\mathrm{ex}}(M,\alpha),\R)
	&H^2(\X_{\mathrm{ex}}(S,\beta),\R)&\Omega^{\ell + 1}_{\Z}(M)
	\ar[l]_{\qquad\alpha \mapsto [\nu]}\ar[dl]^{\alpha \mapsto \bar{\sigma}_{\alpha}}
	\\
	& H^{\ell-1}(M,\R)^* \ar[u]_{\Xi_{\alpha}}&H^{r-1}(S,\R)^* \ar[u]^{\Xi_{\beta}} 
}
\end{equation*}
\end{proposition}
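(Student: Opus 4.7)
Both statements assert commutativity of structurally identical diagrams, so I would prove them in parallel, focusing on the left diagram; the right one is handled by the same argument with $(\tau,\sigma_\beta,\hat\X_{\ex}(M,\alpha))$ replaced by $(\nu,\bar\sigma_\alpha,\hat\X_{\ex}(S,\beta))$.

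The starting point is the identity $\pi^{*}\tau = \difCE \sigma_{\beta}$ already recorded just before the proposition, where $\pi\colon\hat\X_{\ex}(M,\alpha)\to\X_{\ex}(M,\alpha)$ is the Fr\'echet--Lie algebra extension of \cref{prop:ceLA}. I would first verify it by direct computation: using the bracket~\eqref{eq:bracketext},
\[\difCE\sigma_{\beta}\bigl((X,[\psi_{X}]),(Y,[\psi_{Y}])\bigr) = -\sigma_{\beta}\bigl([X,Y],[i_{X}i_{Y}\alpha]\bigr) = -\int_{S}\Phi^{*}(i_{X}i_{Y}\alpha)\wedge\beta,\]
which coincides with $\tau(X,Y)=\int_{S}\Phi^{*}(i_{Y}i_{X}\alpha)\wedge\beta$ since $i_{X}i_{Y}+i_{Y}i_{X}=0$ on forms of degree at least two. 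The companion identity $\pi^{*}\nu = \difCE\bar{\sigma}_{\alpha}$ on $\hat\X_{\ex}(S,\beta)$ is verified in the same way.

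Granted this, the proposition reduces to a standard fact about central Lie algebra extensions. Choose any continuous linear section $s\colon\X_{\ex}(M,\alpha)\to\hat\X_{\ex}(M,\alpha)$ of $\pi$ and let
\[c(X,Y)\defeq [s(X),s(Y)]_{\hat\X_{\ex}(M,\alpha)}-s([X,Y]) \;\in\; H^{\ell-1}(M,\R)\]
be the resulting $H^{\ell-1}(M,\R)$-valued $2$-cocycle representing~\eqref{eq:extensionLA}. Unwinding the definition of $\Xi_{\alpha}$ just above the proposition, using $\sigma_\beta$ itself as the extension of $\sigma_\beta|_{H^{\ell-1}(M,\R)}$ to a functional on $\hat\X_{\ex}(M,\alpha)$, one sees that $\Xi_{\alpha}\bigl(\sigma_{\beta}|_{H^{\ell-1}(M,\R)}\bigr)$ is represented up to Chevalley--Eilenberg coboundary by the push-forward cocycle $(X,Y)\mapsto \sigma_{\beta}\bigl(c(X,Y)\bigr)$. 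Evaluating $\pi^{*}\tau = \difCE\sigma_{\beta}$ on the pair $(s(X),s(Y))$ and expanding $[s(X),s(Y)]_{\hat\X_{\ex}(M,\alpha)}=s([X,Y])+c(X,Y)$ then exhibits $\tau$ and $\sigma_{\beta}\circ c$ as differing by the Chevalley--Eilenberg coboundary of the $1$-cochain $\sigma_{\beta}\circ s$ on $\X_{\ex}(M,\alpha)$, so they represent the same class in $H^{2}(\X_{\ex}(M,\alpha),\R)$, proving commutativity of the left diagram. The right diagram follows verbatim, using the extension $\pi\colon\hat\X_{\ex}(S,\beta)\to\X_{\ex}(S,\beta)$ and the pair $(\nu,\bar\sigma_{\alpha})$ in place of $(\tau,\sigma_\beta)$.

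The mathematical input is modest: once the bracket~\eqref{eq:bracketext} and the $1$-cochains $\sigma_{\beta}$ and $\bar\sigma_{\alpha}$ are in hand, both identities $\pi^{*}\tau = \difCE\sigma_{\beta}$ and $\pi^{*}\nu = \difCE\bar\sigma_{\alpha}$ are elementary calculations with interior products and fiber integrals, and the remainder is the standard description of the push-forward of a central Lie algebra extension along a functional on the center. The only real obstacle is bookkeeping: one must carefully track the Chevalley--Eilenberg sign conventions and the direction of the push-forward to ensure that the two cocycles are genuinely cohomologous and not merely cohomologous up to an overall sign.
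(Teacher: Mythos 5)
Your proposal is correct and takes essentially the same route as the paper: the paper gives no separate proof of \cref{prop:twodiagrams}, presenting it as a summary of the identities $\pi^*\tau = \difCE\sigma_\beta$ and $\pi^*\nu = \difCE\bar\sigma_\alpha$ stated immediately before it together with the construction of $\Xi_\omega$, and your argument simply verifies those identities by the same direct computation and makes the reduction to $\Xi$ explicit via a section of $\pi$. The sign bookkeeping you flag at the end is a real (if harmless) wrinkle inherited from the paper's own conventions --- the bracket \eqref{eq:bracketext} carries $i_v i_w\omega$ while $\tau$ in \eqref{eq:LAcocexM} carries $i_Y i_X\alpha$ --- so it does not count against your proof.
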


In general, not every central extension of Lie algebras integrates 
to a central extension of Lie groups, see \eg~\cite{Ne02}. 
Combining Proposition~\ref{prop:twodiagrams} 
with 
Theorem~\ref{prop::centralExtensionOfDiffeoGroups},
we identify a lattice of integrable classes.

\begin{theorem}[Compact version]\label{cor:rol}
Let $M$ be a compact orientable $n$-dimensional manifold, and let 
$h \in \widehat{H}^{k}(M,\TT)$ be a differential character with curvature $\omega \in \Omega^{k+1}(M)_{\Z}$. 
Let~$G$ be a Fr\'echet--Lie group, and let 
$i \colon G \rightarrow \Diff(M,h)$ be a smooth 
action on $M$ such that, for every $g\in G$, the diffeomorphism 
 $i(g) \colon M \rightarrow M$ preserves $h$ and is homotopic to the identity. 
Finally, let 
\[H^{k-1}(M,\R) \rightarrow \widehat{\fg} \rightarrow \fg\]
be the pull-back of the central extension \eqref{eq:extensionLA}
along the Lie algebra homomorphism $i_{\fg} \colon \fg \rightarrow \X_{\ex}(M,\omega)$.
Then the image of the integral classes $H^{n-k+1}(M,\R)_{\Z} \subseteq H^{k-1}(M,\R)^*$
under the map 
$i_{\fg}^*\,\circ\,\Xi_\om \colon H^{k-1}(M,\R)^* \rightarrow H^2(\fg,\R)$ 
consists 
of classes in $H^2(\fg,\R)$ that are integrable to central Lie group extensions of $G$ by~$\T$.
\end{theorem}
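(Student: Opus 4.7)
My plan is to apply \cref{prop::centralExtensionOfDiffeoGroups} in the special case $S := M$ (equipped with a chosen orientation) and $\Phi := \id_M \in C^\infty(M,M)$, so that left composition reproduces the given $G$-action on $M$ and $\Phi^*$ acts trivially on forms. Let $\lambda \in H^{n-k+1}(M,\R)_\Z$ be the integral class to be integrated. First, I would choose a differential character $g \in \widehat{H}^{n-k}(M,\T)$ whose characteristic class $\ch(g) \in H^{n-k+1}(M,\Z)$ maps to $\lambda$ in $H^{n-k+1}(M,\R)$; such a $g$ exists because $\ch$ is surjective by~\eqref{eq:ch-seq}. Its curvature $\beta := \curv(g) \in \Omega^{n-k+1}_\Z(M)$ is then an integral representative of $\lambda$. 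Under the Poincar\'e duality embedding $H^{n-k+1}(M,\R) \hookrightarrow H^{k-1}(M,\R)^*$ given by $[\beta] \mapsto \bigl([\psi] \mapsto \int_M \psi \wedge \beta\bigr)$, the class $\lambda$ is precisely the restriction to $H^{k-1}(M,\R)$ of the $1$-cochain $\sigma_\beta$ from~\eqref{eq:gramschaap}, evaluated at $\Phi = \id_M$.

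I would then verify the hypotheses of \cref{prop::centralExtensionOfDiffeoGroups} with $\ell := k$ and $r := n - k$, so $\ell + r = \dim S$. The map $i \colon G \to \Diff(M)$ lands in $\Diff(M,h,[\id_M])$: preservation of $h$ is given, and the hypothesis that every $i(g)$ is homotopic to $\id_M$ translates to $A(i(g))(\id_M) = i(g) \simeq \id_M$ in $C^\infty(M,M)$, so the homotopy class $[\id_M]$ is preserved. Taking the second Fr\'echet--Lie group in the theorem to be trivial, I obtain a smooth central extension $\T \to \widehat G \to G$ whose Lie algebra cocycle is the pull-back along $i_\fg$ of the continuous cocycle
\[
\tau(X,Y) = \int_M (i_Y i_X \omega) \wedge \beta
\]
on $\X_\ex(M,\omega)$.

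Finally, by \cref{prop:twodiagrams} the class $[\tau] \in H^2(\X_\ex(M,\omega),\R)$ equals $\Xi_\omega\bigl(\sigma_\beta|_{H^{k-1}(M,\R)}\bigr) = \Xi_\omega(\lambda)$, and pulling back along $i_\fg$ yields $[i_\fg^*\tau] = i_\fg^*\bigl(\Xi_\omega(\lambda)\bigr)$. The smooth Lie group extension $\widehat G \to G$ of the previous paragraph is thus an integration of the required class. The only genuine subtlety is matching Poincar\'e duality with the functional $\sigma_\beta|_{H^{k-1}(M,\R)}$ so that the choice of integral form $\beta$ realizes the prescribed $\lambda$; the remainder is a direct concatenation of \cref{prop::centralExtensionOfDiffeoGroups,prop:twodiagrams}.
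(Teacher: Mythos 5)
Your proposal is correct and follows essentially the same route as the paper: apply \cref{prop::centralExtensionOfDiffeoGroups} with $S=M$, $\Phi=\id_M$, $\alpha=\omega$ and a character $g$ with $\curv(g)=\beta$ representing the integral class, then invoke \cref{prop:twodiagrams} to identify $[\tau]$ with $\Xi_\omega$ of the Poincar\'e pairing with $[\beta]$. The paper merely spells out the last compatibility step (that restriction of the pulled-back cocycle to $H^{k-1}(M,\R)$ is unchanged under $i_\fg$) via an explicit commutative diagram, which you assert more tersely.
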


\begin{proof}
For $[\beta] \in H^{n-k+1}(M,\R)_{\Z}$, choose a differential character
$g\in \widehat{H}^{n-k}(M,\TT)$ 
with $\curv(g) = \beta$. 
By Theorem~\ref{prop::centralExtensionOfDiffeoGroups} with $M=S$, $\Phi = \mathrm{Id}$, 
and $\alpha = \omega$,
we obtain a central Lie group extension 
\begin{equation}
\label{eq:stensie}
\TT \rightarrow \widehat{G} \rightarrow G.
\end{equation}
(Indeed, the diffeomorphism $i(g)$ preserves $[\mathrm{Id}]\in \pi_0(C^{\infty}(M,M))$ if it is homotopic to the identity.)
The corresponding Lie algebra cocycle is the pull-back along 
$i_{\fg} \colon \fg \rightarrow \X_{\ex}(M,\omega)$
of the 2-cocycle $\tau(X,Y) = \int_{M}(i_{Y}i_{X}\omega)\wedge\beta $
on $\X_{\ex}(M,\omega)$ (cf. equation~\eqref{eq:LAcocexM}).
By Proposition~\ref{prop:twodiagrams}, the isomorphism class of 
the cocycle $\tau$ is determined by the restriction 
of $\sigma_{\beta}$ to $H^{k-1}(M,\R) \subseteq \widehat{\X}_{\ex}(M,\omega)$.
This is the pairing with $[\beta] \in H^{n-k+1}(M,\R)_{\Z}$.
In terms of the commutative diagram
\begin{equation*}
\xymatrix{
	0\ar[r]&H^{k-1}(M,\R)\ar@{=}[d]\ar[r] 
	&\hat\X_{\ex}(M, \omega)\ar[r]^{\pi}
	&\X_{\ex}(M,\omega)\ar[r] & 0
	\\
	0\ar[r]& H^{k-1}(M,\R)\ar[r]
	&\hat{\fg} \ar[r]^{\pi_\fg}\ar[u]^{\hat{i}_{\fg}} 
	&\fg \ar[r] \ar[u]^{i_{\fg}}
	& 0,
}
\end{equation*}
we have $\pi_{\fg}^*i_{\fg}^*\tau = \difCE(\widehat{i_{\fg}}^*\sigma_{\beta})$.
Since
$\widehat{i_{\fg}}$ is the identity on $H^{k-1}(M,\R)$, 
the restriction of $\pi_{\fg}^*i_{\fg}^*\tau$ to 
$H^{k-1}(M,\R)$ is the same as the restriction of $\sigma_{\beta}$, 
namely the pairing with $[\beta]$.
\end{proof}


In order to extend Theorem~\ref{cor:rol} to 
manifolds $M$ that are not necessarily compact or orientable, we use
`singular' functionals on $\widehat{\X}_{\ex}(M,\omega)$. Suppose
that $z \in H_{k-1}(M,\Z)$ can be represented as $z = \Phi_*[S]$, where
$[S]$ is the fundamental class of a closed, oriented manifold $S$
and $\Phi \colon S \rightarrow M$ is a smooth map.
Then, with $\beta = 1$, equation~\eqref{eq:gramschaap} 
yields the linear functional $\sigma(X,\psi_{X}) = \int_{S}\Phi^*\psi_{X}$ on $\widehat{\X}_{\ex}(M,\omega)$,
whose restriction to $H^k(M,\R)$ is the pairing with $\Phi_*[S]$.
This yields a way of building group extensions for those classes in $H_{k-1}(M,\Z)$ that can be represented by a closed, 
oriented, smooth manifold.

\begin{corollary}[Noncompact version]\label{cor:rol2}
Let $M$ be a manifold of dimension $n$, 
and let 
$h \in \smash{\widehat{H}^{k}}(M,\TT)$ be a differential character with curvature $\omega \in \Omega^{k+1}(M)_{\Z}$. 
Assume that a class in $H_{k-1}(M,\Z)$ can be realized as the push-forward 
$\Phi_{*}[S]$ of the fundamental class of a  
closed, oriented $(k-1)$-manifold $S$ along a smooth map $\Phi \colon S \rightarrow M$. 
Let~$G$ be a Fr\'echet--Lie group, and let 
$i \colon G \rightarrow \Diff(M,h)$ be a smooth 
action on $M$ such that, for all $g\in G$, 
$i(g) \colon M \rightarrow M$ preserves $h$, and $i(g)^*\Phi$ is homotopic to $\Phi$.
Then the Lie algebra extension of \( \fg \) corresponding to the image of $\Phi_*[S]$ under the map 
$i_{\fg}^*\,\circ\,\Xi _\om\colon H^{k-1}(M,\R)^* \rightarrow H^2(\fg,\R)$ 
is integrable to a central Lie group extension of $G$ by~$\T$.
\end{corollary}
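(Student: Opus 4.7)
The plan is to adapt the proof of \cref{cor:rol}, replacing the hat product $h \hatProduct g$ by a direct fiber integration of $h$. Let $\ev\colon C^\infty(S, M) \times S \to M$ be the evaluation map. Since $\dim S = k - 1$ and $h$ has degree $k$, fiber integration of differential characters (\cite[Def.~38]{BB}) yields a degree-$1$ character
\[
\tilde h \defeq \intFibre_S \ev^* h \in \widehat H^1(C^\infty(S, M), \T),
\]
with curvature the closed integral $2$-form $\intFibre_S \ev^* \omega$. By \cref{thm:H1isbundles}, $\tilde h$ is realized as the holonomy of a principal $\T$-bundle $(\P, \nabla)$ over $C^\infty(S, M)$.

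Next, since each $i(g)$ preserves $h$ and fixes the homotopy class of $\Phi$, the induced action $A \circ i\colon G \to \Diff(C^\infty(S, M))$ preserves both $\tilde h$ (by naturality of fiber integration, as in the proof of \cref{prop:transgress}) and the connected component $C^\infty(S, M)_\Phi$. By \cref{thm:H1isbundles}, each $A(i(g))$ then lifts to a connection-preserving automorphism of $(\P_\Phi, \nabla)$. Pulling back the central extension $\T \to \AutGroup(\P_\Phi, \nabla) \to \Diff(C^\infty(S, M)_\Phi)$ along $A \circ i$ produces a central extension $\T \to \widehat G \to G$, which I expect to be smooth as a Fr\'echet--Lie group extension by the same diffeological smoothness argument employed in the proof of \cref{prop::centralExtensionOfDiffeoGroups}.

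Finally, I would identify the resulting Lie algebra cocycle. The infinitesimal action at $\Phi$ sends $X \in \X_{\ex}(M, \omega)$ to $X \circ \Phi$, so pulling back the curvature at $\Phi$ gives the $2$-cocycle
\[
\tau(X, Y) = \int_S \Phi^*(i_Y i_X \omega)
\]
on $\X_{\ex}(M, \omega)$, exactly as in \cref{prop::centralExtensionOfDiffeoGroups} with the form on $S$ taken to be the constant $1$. Paralleling \cref{subsec:ExtLAlevel}, a direct computation yields $\pi^* \tau = \difCE \sigma$ on $\widehat\X_{\ex}(M, \omega)$ for the Lie algebra $1$-cochain $\sigma(X, [\psi_X]) \defeq \int_S \Phi^* \psi_X$, whose restriction to $H^{k-1}(M, \R)$ is precisely the pairing with $\Phi_*[S]$. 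The commutative diagram from the proof of \cref{cor:rol} then identifies the class of $i_\fg^* \tau$ with $i_\fg^* \Xi_\omega(\Phi_*[S]) \in H^2(\fg, \R)$, as required.

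The main conceptual obstacle is that \cref{prop::centralExtensionOfDiffeoGroups} is formulated under the constraint $\ell + r = \dim S$ with $r \geq 0$, whereas our setting would formally require $r = -1$. I would bypass this by omitting the auxiliary character $g$ on $S$ altogether and working with fiber integration of $h$ directly; the subsequent steps of \cref{prop::prequantumBundleOnFunctionSpace,prop::centralExtensionOfDiffeoGroups} then transfer without modification, since the only role of $g$ was to convert the transgressed form into a bundle with connection, which is already provided by \cref{thm:H1isbundles}.
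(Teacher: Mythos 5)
Your proposal is correct and follows essentially the same route as the paper: the paper obtains \cref{cor:rol2} by applying \cref{prop::centralExtensionOfDiffeoGroups} with $\alpha=\omega$ and $\beta=1\in\Omega^0(S)_\Z$ (the curvature of the unit character of degree $-1$), so that the hat product reduces to exactly the fiber integration $\intFibre_S \ev^*h$ you use, and the Lie algebra class is identified via the cochain $\sigma(X,[\psi_X])=\int_S\Phi^*\psi_X$ from \eqref{eq:gramschaap}, whose restriction to $H^{k-1}(M,\R)$ is the pairing with $\Phi_*[S]$. Your worry about the constraint $r\geq 0$ is thus resolved the same way you propose -- the degree-$(-1)$ character is just the integer $1$, and multiplying by it is the identity -- so the two arguments coincide.
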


In general, not every class $z \in H_{k-1}(M,\Z)$ can be represented as $z = \Phi_*[S]$.
By \cite[Thm.~9.1]{ConnerFloyd1964}, the classes with this property are \emph{Steenrod representable}; they constitute 
the image of the bordism group under the natural homomorphism 
$\Omega_{k-1}(M) \rightarrow H_{k-1}(M,\Z)$
that takes $[f\colon S \rightarrow M]$ to $f_*[S]$.
This image has been extensively investigated by Thom in \cite[especially \S 2.11]{Thom1954}.
If $H_{\bullet}(M,\Z)$ is finitely generated and has no odd torsion, then 
\emph{every} class is Steenrod representable by \cite[Thm.~15.2]{ConnerFloyd1964}.
In this case Corollary~\ref{cor:rol2} is simply a generalization of Theorem~\ref{cor:rol}
to noncompact manifolds.
Without any assumptions on $H_{\bullet}(M,\Z)$, one can still show that every class 
admits an odd multiple that is Steenrod representable \cite[Thm.~15.3]{ConnerFloyd1964}.
In particular, Corollary~\ref{cor:rol2} yields a lattice of full rank in $H^{k-1}(M,\R)^*$
that gives rise to integrable Lie algebra extensions, regardless whether $H_{\bullet}(M,\Z)$
has odd torsion or not.

If $M$ is compact and orientable, then the Lie algebra extensions of \cref{cor:rol2} are isomorphic to the ones in Theorem~\ref{cor:rol} by Poincar\'e duality, although the group extensions may well be different.


%

\subsection{The volume-preserving diffeomorphism group}
Let $\mu$ be an integral volume form on an $n$-dimensional compact manifold~$M$.
Then the extension~\eqref{eq:extensionLA} 
is the \emph{Lichnerowicz extension} (\cite{Lichnerowicz},~\cite[Sec.~10]{Ro95})
\begin{equation}\label{LichExt}
H^{n-2}(M,\R) \rightarrow \overline{\Omega}{}^{n-2}(M) \rightarrow \X_{\ex}(M,\mu).
\end{equation} 
By \cref{cor:rol},
the elements in the integral lattice $H_{n-2}(M,\Z) \subseteq H^{n-2}(M,\R)^*$ 
give rise to Lie group extensions of $\Diff_{\ex}(M,\mu)$.

In the remainder of this section, we illustrate how 
different geometric realizations of these integral 
elements of $H^{n-2}(M,\R)^*$ give rise to different representatives 
of the same integrable classes in $H^2(\X_{\ex}(M,\mu),\R)$.

\subsubsection{Singular Lichnerowicz cocycles}

Let \( h \) be a differential character with curvature \(\mu \). 
Choose a closed manifold \( S \) of dimension \(  n - 2 \), 
and apply \cref{prop::centralExtensionOfDiffeoGroups}
with $\alpha = \mu \in \Omega^{n}(M)_{\Z}$ and 
$\beta = k\in \Omega^0(S)_{\Z}$.
This
yields a smooth central extension
\begin{equation}
	\TT \to \widehat{\Diff}_{\ex}(M, \mu) \to \Diff_{\ex}(M,\mu)
\end{equation}
of the exact volume-preserving diffeomorphism group, 
integrating the \emph{singular Lichnerowicz cocycle} on
 \( \X_{\ex}(M, \mu) \)
\begin{equation}
	\tau(X, Y) = k\int_{S} \Phi^*i_Y i_X \mu.
\end{equation}
These group extensions are closely related to 
\emph{Ismagilov's central extensions}~\cite[Sec.~25.3]{Ismagilov}
of the Lie group $\Diff_{\ex}(M,\mu)$.


\subsubsection{Lichnerowicz cocycles}
As for the Hamiltonian actions in \S\ref{sec:intro}, we can change 
the role of \( \alpha \) and \( \beta \) to obtain (possibly nonequivalent) group extensions corresponding 
to cohomologous Lie algebra cocycles.
If $\omega$ is a closed, integral 2-form on $M$, then we can apply
\cref{prop::centralExtensionOfDiffeoGroups} to $S = M$, 
with $\alpha = k \omega$, $\beta = \mu$, and
$\Phi \colon S \rightarrow M$ the identity map.
For every choice of differential characters $g$ and $h$ 
with $\curv(g) = \mu$ and $\curv(h) = k\omega$, we then obtain a central extension of $\Diff_{\ex}(M,\mu)$
that integrates the \emph{Lichnerowicz cocycle} on \( \X_{\ex}(M, \mu) \),
\begin{equation}
	\nu(X, Y) = k \int_M \omega (X, Y)\, \mu.
\end{equation}
If \( \Phi_*[S] \in H_{n-2}(M,\Z) \) is Poincar\'e dual to \( -[\omega] \in H^2(M,\R)_{\Z} \), then the singular Lichnerowicz cocycle \( \tau \) is cohomologous to \( \nu \)
\cite{Vizman,JV15}. 

\subsubsection{Other representations}
Let \( N \) be an \( (n-1) \)-dimensional manifold.
Consider a differential character \( F \) of degree \( 0 \) on \( N \) with curvature \( 1 \)-form \( \rho \).
The character \( F \) can be viewed as a \( \T \)-valued function \( F \colon N \to \T \) on \( N \) and, from this viewpoint, \( \rho \) is then the logarithmic derivative \( \delta F \) of \( F \).
Let \( \Psi \colon N \to M \) be a smooth map.
By \cref{prop::centralExtensionOfDiffeoGroups} (with \( S = N \), \( \alpha = \mu \), \( \beta = \delta F \), and \( \Phi = \Psi \)), there exists a smooth central extension of \( \Diff_{\ex}(M,\mu, [\Psi]) \) that integrates the following Lie algebra cocycle on \( \X_{\ex}(M, \mu) \):
\begin{equation}
	\kappa(X, Y) = \int_N \Psi^* (i_Y i_X \mu) \wedge \delta F .
\end{equation}
If the functional on \( H^{n-2}(M, \R) \) given by
\begin{equation}
	\equivClass{\alpha} \mapsto \int_N \Psi^* \alpha \wedge \delta F
\end{equation}
coincides with the one given by \( \Phi_*[S] \in H_{n-2}(M,\Z) \) or by \( -[\omega] \in H^2(M,\R)_{\Z} \), then the cocycle \( \kappa \) is cohomologous to the Lichnerowicz cocycles \( \tau \) and \( \nu \), respectively.
In the context of hydrodynamics, the pair \( (N, \beta) \) corresponds to a codimension one singular membrane in the ideal fluid on \( M \).
We refer to~\cite{GayBalmazVizman2019} for further details.

\subsubsection{Outlook}
On the Lie algebra level, the central $\R$-extensions of $\X_{\ex}(M)$  constructed from 
functionals $\lambda \in H^{n-2}(M,\R)^*$ all
descend from a \emph{single} central extension of $\X_{\ex}(M)$ by $H^{n-2}(M,\R)$,
namely the Lichnerowicz extension \eqref{LichExt}. 
It is natural to ask whether, at the group level, the central $\T$-extensions constructed above
descend from a single central Lie group extension of $\Diff_{\ex}(M)$ by $H^{n-2}(M,\T)$. 

If $M$ is a 3-dimensional compact, connected, orientable 
manifold, we believe that this is indeed the case. To construct the required extension, let $h \in \widehat{H}^2(M,\T)$ be a differential character whose curvature is the volume form $\mu$.
Since $h$ is of degree $2$, the principal circle bundle $\cP \rightarrow C^{\infty}(S^1,M)$ 
can be viewed as the bundle obtained from transgression of the $U(1)$-bundle gerbe corresponding to $h$ (cf.~\cite{Brylinski2007}).
In fact, since the circle bundle $\cP \rightarrow C^{\infty}(S^1,M)$ is obtained by transgression of a bundle gerbe,
it is not only endowed with a connection, but also with a \emph{fusion structure}~\cite{Wa16}. It is therefore natural to consider 
the group of automorphisms of $\cP \rightarrow C^{\infty}(S^1,M)$ that preserve both the connection and -- 
in an appropriate sense -- also the fusion structure. Since the group of \emph{vertical} automorphisms of 
this type is isomorphic to $H^1(M,\T)$, 
this is a natural candidate for a central Lie group extension. We are currently pursuing this line of reasoning in 
joint work with Peter Kristel.

\appendix

\section{Central extensions of non-connected groups} 
\label{app:a.1}

In this appendix we show how to extend \cite[Thm.~3.4]{NV} 
concerning the Lie group structure on the central extension of a connected Lie group $G$, obtained by pull-back
of the prequantization central extension,
to non-connected Lie groups.
The results also work for locally convex manifolds. 

Let $Z$ be a connected abelian Lie group 
of the form $\fz/\Gamma$,
where $\fz$ is a Mackey complete vector space and  
$\Gamma \subeq \fz$ is a discrete subgroup. 
Furthermore, let 
$q : \P \to \M$ be a $Z$-bundle over the connected locally convex manifold $\M$, 
and let $\theta$ be a connection $1$-form with curvature $\omega$. 
We further assume that 
$\si : G \to \Diff(\M,\omega)$ 
defines a smooth action of the Lie group $G$ on $\M$ whose image
lies in the group $\Diff_{\hol}(\M)$ of holonomy-preserving diffeomorphism. 
According to~\cite{NV}, this implies that $\si(G)$ lies in the image of 
$\Aut(\P,\th)$, so that the pull-back
\[ \hat G \defeq \si^*\Aut(\P,\th) = G \times_{\Diff_{\hol}(\M)} \Aut(\P,\th) \] 
of the extension \( \Aut(\P, \theta) \to \Diff_{\hol}(\M) \) is a central $Z$-extension of $G$. 
The following theorem is a generalization of~\cite[Thm.~3.4]{NV} to non-connected
 Lie groups. 

\begin{theorem}\label{gene} 
If $\cM$ is connected and $G$ not necessarily connected with 
$\si(G) \subset\Diff_{\hol}(\M)$, then $\hat G$ carries a Lie group structure 
for which it is a central Lie group extension by $Z$.
Moreover, the action of $\hat G$ on $\P$ defined  by 
$(g, \ph) \cdot p = \ph(p)$ for \( g \in G \) and \( \ph \in \Aut(\P, \theta) \) is smooth. 
\end{theorem}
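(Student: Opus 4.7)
The plan is to first equip the subgroup $\hat G_0 \defeq q_{\hat G}^{-1}(G_0) \subseteq \hat G$ with a Lie group structure using the connected case, and then promote this structure to all of $\hat G$ by verifying that conjugation by any element of $\hat G$ acts smoothly on $\hat G_0$.

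Applying \cite[Thm.~3.4]{NV} to the restriction $\si|_{G_0}\colon G_0 \to \Diff_{\hol}(\M)$ endows $\hat G_0$ with a Lie group structure as a central $Z$-extension of the connected Lie group $G_0$, for which the canonical action of $\hat G_0$ on $\P$ is smooth. Since $Z = \fz/\Gamma$ and $G_0$ are connected, so is $\hat G_0$; in particular the abstract coset space $\hat G/\hat G_0 \cong G/G_0$ is discrete. I then invoke the standard principle that if $H$ contains a normal subgroup $N$ with a Lie group structure, $H/N$ is discrete, and conjugation $c_h\colon N \to N$ is smooth for every $h \in H$, then $H$ admits a unique Lie group structure in which $N$ is open; charts on the remaining components arise via left-translation by coset representatives, and smoothness of multiplication and inversion then follows formally from smoothness of conjugation together with the group operations on $N$. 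The core task is therefore to prove smoothness of $c_{\hat g}\colon \hat G_0 \to \hat G_0$ for every $\hat g \in \hat G$.

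I would verify this near the identity. Choose a smooth local section $s\colon U \to \hat G_0$ of the principal $Z$-bundle $\hat G_0 \to G_0$ over a neighborhood $U \subseteq G_0$ of $e$, so that elements of $\hat G_0$ close to the identity are of the form $s(g)z$ with $g \in U$ and $z \in Z$. Centrality of $Z$ gives $c_{\hat g}(s(g)z) = \bigl(\hat g\, s(g)\, \hat g^{-1}\bigr)\,z$, so it suffices to show that $g \mapsto \hat g\, s(g)\, \hat g^{-1}$ is smooth into $\hat G_0$. Writing this element as $s'(c_{g_0}(g))\cdot \beta(g)$, where $g_0 = q_{\hat G}(\hat g)$, $s'$ is a smooth local section of $\hat G_0 \to G_0$ near $g_0 e g_0^{-1}$, and $\beta\colon U \to Z$, the question reduces to smoothness of the cocycle $\beta$. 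Here one exploits the smoothness of the $\hat G_0$-action on $\P$ together with the smoothness of the single diffeomorphism $\hat g$ on $\P$: fixing $p_0 \in \P$, both $g \mapsto \hat g \cdot \bigl(s(g) \cdot (\hat g^{-1}\cdot p_0)\bigr)$ and $g \mapsto s'(c_{g_0}(g))\cdot p_0$ are smooth maps $U \to \P$ landing in the same fiber of $\P \to \M$, whence their $Z$-ratio $\beta(g)$ is smooth by local triviality of $\P$.

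Once $\hat G$ carries this Lie group structure, the smoothness of the action $\hat G \times \P \to \P$ is immediate: on each component $\hat g\cdot \hat G_0$ the action factors as $(\hat g\hat g', p) \mapsto \hat g\cdot(\hat g'\cdot p)$, which is smooth by the connected case combined with smoothness of the fixed diffeomorphism $\hat g$ on $\P$. The main obstacle in the argument is the smoothness of $c_{\hat g}$, which requires carefully combining the $Z$-bundle structure of $\P$ with the smooth $\hat G_0$-action in order to control the cocycle $\beta$.
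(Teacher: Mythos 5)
Your argument is correct and follows essentially the same route as the paper: both reduce to the connected case of \cite[Thm.~3.4]{NV} for $\hat G_0$ and then extend to all of $\hat G$ by verifying that conjugation by an arbitrary $\hat g$ is smooth, with that smoothness extracted, exactly as in your cocycle computation, from the smoothness of the $\hat G_0$-action on $\P$ combined with the fact that each fixed $\hat g$ acts on $\P$ by a bundle automorphism. The only cosmetic difference is that the paper equips all of $\hat G$ with its manifold structure at once, via the identification with the pullback bundle $(\sigma^{m_0})^*\P$ over the whole (non-connected) group $G$, and then checks that left multiplications are smooth, whereas you transport the structure from $\hat G_0$ to the remaining components by left translation using the standard extension principle for a normal open Lie subgroup with discrete quotient; the resulting Lie group structures coincide.
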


\begin{proof}
Pick $m_0 \in \cM$ and let $\sigma^{m_0} : G \to \M, g \mapsto \si(g)m_0$ be the corresponding 
smooth orbit map. We endow $\hat G$ with the smooth manifold structure obtained by identifying it 
with the $Z$-bundle $(\sigma^{m_0})^*\P = G \times_\M \P$ 
over $G$;~\cite[Lemma~3.2]{NV} also works for non-connected groups~$G$. 
In~\cite[Thm.~3.4]{NV} we have already seen that we thus obtain a Lie group 
structure on the central extension $\hat G_0$ of the identity component $G_0$ of $G$. 
We now show that the smooth structure extends to all of $\hat G$. 

The elements of $\hat G$ are of the form $\hat g = (g, \hat{\si_g})$, where the quantomorphism $\hat\si_g$ projects to $\si_g$.
Since every $\hat g \in \hat G$ acts as an automorphism on $\P$, the left multiplication 
by $\hat g = (g, \hat{\si_g})$ in 
$\hat G \cong (\sigma^{m_0})^*\P \subset G\times \P$ is smooth. 
It therefore remains to show that the 
inner automorphisms of $\hat G$ are smooth. 
Fix $\hat g_0 = (g_0, \hat{\si_{g_0}})$ in $\hat G$. For 
$\hat g=(g, \hat{\si_g}) \in \hat G$ we then have 
\[ \hat g_0^{} \hat g \, \hat g_0^{-1} 
= (g_0^{} g g_0^{-1}, \hat{\si_{g_0}}\hat{\si_{g}}\hat{\si_{g_0}}^{-1}).\] 
Pick a point $y_0 \in \P_{m_0}$. We have to show that the map 
\[ \hat G \to \P, \quad (g, \hat{\si_g}) \mapsto 
\hat{\si_{g_0}}\hat{\si_{g}}\hat{\si_{g_0}}^{-1}(y_0) \] 
is smooth. This will follow if all orbits maps for $\hat G$ on $\P$ are smooth, 
but this is ensured by the smoothness of the action of the connected Lie group $\hat G_0$ on $\P$ 
(\cite{NV}). 
\end{proof}

\section{Circle bundles and differential characters}
\label{sec::diffCharactersDegreeOne}
In this appendix we give a direct proof of the well-known correspondence between equivalence classes of principal circle bundles with connection and differential characters of degree \( 1 \).
Alternatively, this can be derived from the fact that principal circle bundles with connection are classified by Deligne cohomology~\cite[Thm.~2.2.12]{Brylinski2007}, which is a different model for differential cohomology, \cf~\cite[Sec.~5.2]{BB}.
\HOneIsBundles*
\begin{proof}
If two principal $\TT$-bundles with connection $P$ and $Q$ over $M$ have the same holonomy 
$h \colon Z_{1}(M)\rightarrow \TT$, then we show that they are isomorphic 
by trivializing
$\overline{P} \otimes_{\TT}Q$,
where \( \overline{P} \) has the same underlying manifold as \( P \) but inverse \( \TT \)-action.
A flat section 
$s \colon M \rightarrow \overline{P} \otimes_{\TT}Q$ is obtained as follows.
Choose a base point $m_i$ in every connected component $M_i$ of $M$,  
and choose $p_i \in \overline{P} \otimes_{\TT}Q$ that project to $m_i$.
(The section will depend on these choices in a controlled way.)
For a point $m\in M$, choose a piecewise smooth path $\gamma \colon [0,1] \rightarrow M$
from $\gamma(0) = m_i$ to $\gamma(1) = m$, and let $\overline{\gamma}$ be its horizontal lift to
$\overline{P} \otimes_{\TT}Q$ with initial point $\overline{\gamma}(0) = p_i$.
Then $s(m) \defeq \overline{\gamma}(1)$.   
The fact that the holonomies coincide guarantees that 
$s(m)$ does not depend on the choice of path, and $s$ is smooth
because in a local trivialization it takes the form $s(x) = (x, \exp(\int_{\straightpath{0x}} A))$, 
where $\straightpath{0x}$ is a straight line from $0$ to $x$ and $A$ is the 1-form that represents
the connection in the given trivialization.

It remains to show that for every $h \in \widehat{H}^1(M,\TT)$, we can construct a principal $\TT$-bundle 
$P \rightarrow M$ whose connection  represents $h$.
Consider the 
groupoid $PM \rightrightarrows M$ of piecewise smooth paths, 
and let $\mathcal{G} \rightrightarrows M$ be the groupoid 
\[\mathcal{G} \defeq (PM \times \TT) /\sim\,,
\text{ where }
(\ga_1,z_1) \sim (\ga_2,z_2)
\,\Leftrightarrow\,
h(\ga_{1} \ast \ga_2^{-1}) = z_1^{-1} z_2\,.
\]
To see that \( \mathcal{G} \) is a Lie groupoid, we define charts as follows:
for \( x, y \in M \), choose open convex coordinate neighborhoods \( U_x, U_y \subseteq M \) centered at these points.
The chart is labeled by a path \( \gamma \) from \( x \) to \( y \)
\begin{equation}
	\Phi_\gamma: U_x \times U_y \times \TT \to \mathcal{G}, \quad (u,v,z) \mapsto \equivClass{\straightpath{ux} \ast \gamma \ast \straightpath{yv}, z},
\end{equation}
where \( \straightpath{ux} \) denotes the straight path from \( u \in U_x \) to \( x \) in the coordinate neighborhood \( U_x \).
To verify smoothness of the chart transition maps, first consider the case where \( \gamma \) and \( \gamma' \) both start at \( x \) and end at \( y \).
Then the chart transition \( T_{\gamma \gamma'} = \Phi_{\gamma'}^{-1} \circ \Phi_\gamma \) takes the form
\begin{equation}\begin{split}
	T_{\gamma \gamma'}: U_x \times U_y \times \TT &\to U_x \times U_y \times \TT, \\
	(u,v,z) &\mapsto (u,v, h(\straightpath{ux} \ast \gamma \ast \straightpath{yv} \ast \straightpath{vy} \ast \gamma'^{-1}\ast \straightpath{xu}) \cdot z).
\end{split}\end{equation}
Since \( h \) is invariant under thin homotopies, this expression simplifies to \( T_{\gamma \gamma'}(u,v,z) = (u,v, h(\gamma \ast \gamma'^{-1})\cdot z) \), which is smooth.

Secondly, consider the case where \( \gamma \) goes from \( x \) to \( y \) and \( \gamma' \) from \( x' \) to \( y' \). By the above argument, we can choose the path \( \gamma' \) to be \( \gamma' = \straightpath{x' u_0} \ast \straightpath{u_0 x} \ast \gamma \ast \straightpath{yv_0} \ast \straightpath{v_0 y'} \), where \( u_0 \) and \( v_0 \) are fixed but otherwise arbitrary points in \( U_x \cap U_{x'} \) and \( U_y \cap U_{y'} \), respectively.
A similar calculation as above shows that \( T_{\gamma\gamma'}(u,v,z) = (u,v,h(\eta_{u,v})\cdot z) \), where
\begin{equation}
 	\eta_{u,v} \defeq \textcolor{red}{\straightpath{ux} \ast \gamma \ast \straightpath{yv}} \ast \textcolor{blue}{\straightpath{vy'} \ast \straightpath{y'v_0} \ast \straightpath{v_0y} \ast \gamma^{-1} \ast \straightpath{xu_0} \ast \straightpath{u_0x'} \ast \straightpath{x'u}}. 	
\end{equation}
 Up to thin homotopy \( \eta_{u,v} \) is the boundary of the two diamonds with edges \( u_0, x, u, x' \) and \( v_0, y, v, y' \), and thus \( h(\eta_{u,v}) \) is given by integration of \( \curv(h) \) over the diamonds. Since this operation depends smoothly on \( u \) and \( v \), the transition map \( T_{\gamma\gamma'} \) is smooth.
 \begin{center}
\begin{tikzpicture}[scale=0.8]
	\tikzset{every label/.style={inner sep=0pt}}
	\clip (0.5,-0.85) rectangle (14.25, 3.55);

	\draw[draw=black,use Hobby shortcut,closed=true]
		(0.9,0.3) to [curve through ={(1.2,1.2) (1.6,1.55) (3.8,0.8) (3.1,-0.4) (2.3, -0.5) (1.4,-0.3)}] (0.9,0.3);
	\draw[draw=black,use Hobby shortcut,closed=true]
		(1.7,1.3) to [curve through ={(2.1,2.1) (2.6,2.2) (5.1,2.1) (3.9,0.2) (2.4,0.3)}] (1.7,1.3);
	\begin{scope}
		\clip[use Hobby shortcut,closed=true]
			(0.9,0.3) to [curve through ={(1.2,1.2) (1.6,1.55) (3.8,0.8) (3.1,-0.4) (2.3, -0.5) (1.4,-0.3)}] (0.9,0.3);
		\fill[blue!0,use Hobby shortcut,closed=true]
			(1.7,1.3) to [curve through ={(2.1,2.1) (2.6,2.2) (5.1,2.1) (3.9,0.2) (2.4,0.3)}] (1.7,1.3);
	\end{scope}

	\node[label=above left:\( x' \)] at (1.6,0.1) (xprime) {};
	\node[label=below right:\( u \)] at (3.25,0.65) (u) {};
	\node[label=above left:\( u_0 \)] at (2.5,1.2) (u0) {};
	\node[label=below right:\( x \)] at (4.5,1.9) (x) {};

	\fill[blue!15] (x.center) -- (u.center) -- (xprime.center) -- (u0.center) -- cycle;

	\fill (xprime) circle (1pt);
	\fill (u) circle (1pt);
	\fill (u0) circle (1pt);
	\fill (x) circle (1pt);
	
	\node at (2.8, 2.65) {\( U_x \)};
	\node at (1.2, -0.65) {\( U_{x'} \)};

	\draw[draw=black,use Hobby shortcut,closed=true]
		(9.9,1.3) to [curve through ={(10.2,2) (10.6,2.2) (13.1,2.1) (11.9,0.2) (10.4,0.3)}] (9.9,1.3);
	\draw[draw=black,use Hobby shortcut,closed=true]
		(10.9,0.3) to [curve through ={(11.2,1.4) (11.6,1.6) (14.1,1.1) (12.9,-0.2) (12.3, -0.5) (11.4,-0.3)}] (10.9,0.3);
	\begin{scope}
		\clip[use Hobby shortcut,closed=true]
			(9.9,1.3) to [curve through ={(10.2,2) (10.6,2.2) (13.1,2.1) (11.9,0.2) (10.4,0.3)}] (9.9,1.3);
		\fill[blue!0,use Hobby shortcut,closed=true]
			(10.9,0.3) to [curve through ={(11.2,1.4) (11.6,1.6) (14.1,1.1) (12.9,-0.2) (12.3, -0.5) (11.4,-0.3)}] (10.9,0.3);
	\end{scope}
	
	\node[label=above right:\( y' \)] at (13.1,0.4) (yprime) {};
	\node[label=above right:\( v \)] at (12.6,1.3) (v) {};
	\node[label=below left:\( v_0 \)] at (11.5,0.8) (v0) {};
	\node[label=below left:\( y \)] at (10.9,1.8) (y) {};

	\fill[blue!15] (y.center) -- (v.center) -- (yprime.center) -- (v0.center) -- cycle;

	\fill (yprime) circle (1pt);
	\fill (v) circle (1pt);
	\fill (v0) circle (1pt);
	\fill (y) circle (1pt);	
	
	\node at (12.95, 2.65) {\( U_y \)};
	\node at (13.6, -0.45) {\( U_{y'} \)};

	\draw[->, red] (u) -- (x);
	\draw[<->, black, use Hobby shortcut]
		(x.north east) .. 
		node[above=0.2,pos=5] {\textcolor{red}{\( \gamma \)}}
		node[below=0.2,pos=5] {\textcolor{blue}{\( \gamma^{-1} \)}}
		(5.2,2.3) .. (y.north west);
	\draw[->, red] (y) -- (v);
	\draw[->, blue] (v) -- (yprime);
	\draw[->, blue] (yprime) -- (v0);
	\draw[->, blue] (v0) -- (y);
	\draw[->, blue] (x) -- (u0);
	\draw[->, blue] (u0) -- (xprime);
	\draw[->, blue] (xprime) -- (u);
\end{tikzpicture}
\end{center}


If one chooses a base point $m_i$ in every connected component of $M$,
then the union $P \defeq \bigsqcup_{m_i} s^{-1}(m_i)$ of the source fibers of $\mathcal{G}$
is a principal $\TT$-bundle over $M$.
The groupoid homomorphism $\F: PM \rightarrow \mathcal{G}, \F(\ga) = [\ga,1]$
satisfies $\F(\sigma) = [\straightpath{mm},h(\sigma)]$ if $\sigma$ starts and ends at the same point $m$.
For a smooth path $c \colon [0,1] \rightarrow M$ with $c(0) = m$
and $\frac{d}{dt}\big|_{t=0}c = v$, let $c_{t} \colon [0,1] \rightarrow M$ be defined by \( c_t(s) = c(ts) \).
Define the horizontal lift of $v$ as $ \frac{d}{dt}\big|_{t=0}\F(c_{t}) \cdot [\ga,z]= \frac{d}{dt}\big|_{t=0}[\ga \ast c_t,z]\in T_{[\ga,z]}P$. This connection has holonomy $h$.
\end{proof}


\bibliographystyle{new}

\begin{thebibliography}{CFRZ16}


\bibitem[BHR10]{BaezHoffnungRogers2010}
J. Baez, A. Hoffnung, and C. Rogers,
{\it Categorified symplectic geometry and the classical string},
Commun. Math. Phys. 293 (2010), 701--725

\bibitem[Ba78]{Banyaga1978}
A. Banyaga,
\emph{Sur la structure du groupe des diff\'eomorphismes qui pr\'eservent
une forme symplectique},
Comment. Math. Helv. 53:2 (1978), 174--227


\bibitem[BB14]{BB} C. B\"ar and C. Becker, ``Differential Characters,'' 
Lecture Notes in Mathematics 2112, Springer, Cham, 2014 


\bibitem[Br07]{Brylinski2007} J.-L. Brylinski,
``Loop Spaces, Characteristic Classes and Geometric Quantization,'' Birkh\"auser Boston, 2007

\bibitem[Ca70]{Calabi1970}
E. Calabi,
\emph{On the group of automorphisms of a symplectic manifold},
in `Problems in analysis' ({L}ectures at the {S}ymposium in honor of
{S}alomon {B}ochner, {P}rinceton {U}niv., {P}rinceton,
{N}.{J}., 1969), pp. 1--26, Princeton Univ. Press, 1970

\bibitem[CFRZ16]{CalliesFregierRogersZambon2016}
M. Callies, Y. Fr\'egier, C. Rogers and M. Zambon,
\emph{Homotopy moment maps},
Adv. Math. 303 (2016), 954--1043

\bibitem[CF64]{ConnerFloyd1964}
P.E. Connor and E. E. Floyd,
``Differentiable Periodic Maps'', Ergebnisse der Mathematik und ihrer Grenzgebiete, Band 33, 
Springer Berlin Heidelberg, 1964



\bibitem[CS85]{CS85} J. C. Cheeger, J. S. Simons, \emph{Differential characters and 
geometric invariants}, in ``Geometry and Topology'' (College Park, Md., 1983/84), Lecture Notes in Math. 1167, 
Springer, Berlin 1985, 50--80

\bibitem[DJNV]{DJNV}
T. Diez, B.\ Janssens, K.-H. Neeb and C.\ Vizman,
{\it Central extensions of current groups}, 
to appear

\bibitem[FRS14]{FRS14}
D. Fiorenza, C. Rogers and U. Schreiber
\emph{$L_{\infty}$-algebras of local observables from higher prequantum bundles},
Homol. Homotopy Appl. 16 (2014), 107--142


\bibitem[GBV19]{GayBalmazVizman2019}
F. Gay-Balmaz and C. Vizman, 
{\it Vortex membranes in ideal fluids, coadjoint orbits, and characters}, 
arXiv:1909.12485 [math.SG]

\bibitem[Gl06]{Gloeckner} H.\ Gl\"ockner,
\emph{Implicit functions from topological vector spaces to Banach spaces},
Israel Journal Math. 155 (2006), pp.~205--252

\bibitem[HV04]{HV}
S. Haller and C. Vizman, {\it Non--linear Grassmannians as coadjoint orbits},
Math. Ann. 329:3 (2004), 771--785


\bibitem[Is96]{Ismagilov} R. S. Ismagilov, 
``Representations of infinite-dimen\-sional groups,''
Translations of Mathematical Monographs 152,
American Math. Soc., Providence, RI, 1996 

\bibitem[JV15]{JV15}
B.\ Janssens and C.\ Vizman,
{\it Central extensions of Lie algebras of symplectic and divergence free vector fields}, 
Banach Center Publ. 110, \emph{Geometry of jets and fields}, 105--114, 
Polish Acad. Sci. Inst. Math., 2016

\bibitem[Ko70]{Kostant} B. Kostant, {\it Quantization and unitary representations},
in ``Lectures in Modern Analysis and Applications III,''
Lecture Notes in Math. 170, 1970, Springer, Berlin

\bibitem[KM97]{KrMi97}
A. Kriegl  and P.~W. Michor,
``The Convenient Setting of Global Analysis,''
Mathematical Surveys and Monographs 53,
American Mathematical Society, Providence, RI, 1997 

\bibitem[Li74]{Lichnerowicz}
A. Lichnerowicz, 
{\it Alg\`ebre de Lie des automorphismes 
infinit\'esimaux d'une structure unimodulaire},
Ann. Inst. Fourier 24 (1974), 219--266

\bibitem[Lo94]{Losik1994}
M. V. Losik, 
{\it Categorical Differential Geometry}, 
Cah. Topol. G\'eom. Diff\'er. Cat\'eg., 35(4), 274--290, 1994


\bibitem[Ne02]{Ne02} K.-H. Neeb, 
{\it Central extensions of infinite-dimensional Lie groups}, 
Annales de l'Inst.\ Fourier 52 (2002), 1365--1442  

\bibitem[Ne05]{Ne05} ---,
{\it Lie algebra extensions and higher order cocycles},
J. Geom. Symmetry Phys. {\bf 4} (2005), 1--27

\bibitem[NW08]{NW08}
K.-H. Neeb and F.\ Wagemann,
{\it Lie group structures on groups of smooth and holomorphic maps on non-compact manifolds},
Geom. Dedicata 134
 (2008), 17--60

\bibitem[NV03]{NV}
K.-H. Neeb and C. Vizman, 
{\it Flux homomorphism and principal bundles over infinite dimensional manifolds},
Monatsh. Math. 139 (2003), 309--333 

\bibitem[NV10]{NV10} ---, {\it An abstract setting 
for hamiltonian actions}, Monatsh. Math. 159:3 (2010), 261--288



\bibitem[Ro95]{Ro95}
C.\ Roger, {\it Extensions centrales d'alg\`ebres et de groupes de Lie de dimension 
infinie,alg\`ebre de Virasoro et g\'en\'eralisations},
Rep. Math. Phys. 35 (1995), 225--266

\bibitem[R12]{Rogers2012}
C. Rogers,
{\it $L_{\infty}$-algebras from multisymplectic geometry},
Lett. Math. Phys. 100 (2012), 29--50


\bibitem[Rud91]{Rud91}
W.\ Rudin, ``Functional Analysis'', International Series in Pure and Applied Mathematics. 
McGraw--Hill inc., New York, 2nd ed., 1991

\bibitem[So70]{Souriau}
J.-M. Souriau, ``Structure des Syst\`emes Dynamiques -- Ma\^itrises de Math\'ematiques'', Dunod, 
1970 

\bibitem[Th54]{Thom1954}
R. Thom, {\it Quelques propri\'et\'es globales des vari\'et\'es diff\'erentiables},
Comment. Math. Helv. 28 (1954), 17--86

\bibitem[Vi10]{Vizman}
C. Vizman, {\it Lichnerowicz cocycles and central Lie group extensions}, 
Analele Universit\u a\c tii de Vest Timi\c soara,
Seria Matematic\u a--Informatic\u a XLVIII (2010), 285--297

\bibitem[Vi11]{Vizman2} 
---, {\it Induced differential forms on manifolds of functions}, 
Archivum Mathematicum 47 (2011), 201--215

\bibitem[Wa16]{Wa16}
Konrad Waldorf,
{\it Transgression to Loop spaces and its inverse, II: Gerbes and fusion bundles with connection},
Asian J. Math {\bf 20:1}, 59--116.

\bibitem[Za12]{Zambon2012}
M. Zambon,
{\it $L_{\infty}$-algebras and higher analogues of Dirac structures and Courant algebroids},
J. Symplect. Geom. 10(4) (2010), 563--599

\end{thebibliography}

\vspace*{7ex}

\noindent Tobias Diez. Max Planck Institute for Mathematics in the Sciences, 04103 Leipzig, Germany and Institut f\"ur Theoretische Physik, Universität Leipzig, 04009 Leipzig, Germany and Institute of Applied Mathematics, Delft University of Technology, 2628 XE Delft, The Netherlands. \texttt{tobias.diez@mis.mpg.de}
\\[1ex]
\noindent Bas Janssens. Institute of Applied Mathematics, Delft University of Technology, 2628 XE Delft, The Netherlands. \texttt{b.janssens@tudelft.nl}
\\[1ex]
\noindent Karl-Hermann Neeb. Department of Mathematics, FAU Erlangen-N\"urnberg, 91058 Erlangen, Germany. \texttt{neeb@math.fau.de}
\\[1ex]
\noindent Cornelia Vizman. Department of Mathematics, West University of Timi\c soara. RO--300223 Timi\c soara. Romania. \texttt{cornelia.vizman@e-uvt.ro}
\end{document}